\documentclass[11pt,reqno,letterpaper]{amsart}
\usepackage{amssymb}
\usepackage{hyperref}
\usepackage{fullpage} 

\usepackage{cite} 

\usepackage{scalerel}[2014/03/10]
\usepackage[usestackEOL]{stackengine}

\def\dashint{\,\ThisStyle{\ensurestackMath{%
 \stackinset{c}{.2\LMpt}{c}{.5\LMpt}{\SavedStyle-}{\SavedStyle\phantom{\int}}}%
 \setbox0=\hbox{$\SavedStyle\int\,$}\kern-\wd0}\int}
\def\ddashint{\,\ThisStyle{\ensurestackMath{%
 \stackinset{c}{.2\LMpt}{c}{.5\LMpt+.2\LMex}{\SavedStyle-}{%
 \stackinset{c}{.2\LMpt}{c}{.5\LMpt-.2\LMex}{\SavedStyle-}{%
 \SavedStyle\phantom{\int}}}}\setbox0=\hbox{$\SavedStyle\int\,$}\kern-\wd0}\int}

\theoremstyle{plain}

\def \eps{\varepsilon}
\def \R{\mathbb R}
\def \lap{\Delta}

\begin{document}


\theoremstyle{plain}
\newtheorem{theorem}{Theorem} [section]
\newtheorem{corollary}[theorem]{Corollary}
\newtheorem{lemma}[theorem]{Lemma}
\newtheorem{proposition}[theorem]{Proposition}
\newtheorem{example}[theorem]{Example}


\theoremstyle{definition}
\newtheorem{definition}[theorem]{Definition}
\theoremstyle{remark}
\newtheorem{remark}[theorem]{Remark}

\numberwithin{theorem}{section}
\numberwithin{equation}{section}

\title{Pointwise mean-value formulas with quantitative remainder for higher-order Poisson equations}

\author[F. Charro]{Fernando Charro}
\address{Department of Mathematics, Wayne State University, 656 W. Kirby, Detroit, MI 48202, USA}
\email{fcharro@wayne.edu}
\thanks{}

\author[S. Hertrich]{Sophia Hertrich}
\address{Department of Mathematics, Wayne State University, 656 W. Kirby, Detroit, MI 48202, USA}
\email{sophia.hertrich@wayne.edu}

\author[C. Lebiedzik]{Catherine Lebiedzik}
\address{Department of Mathematics, Wayne State University, 656 W. Kirby, Detroit, MI 48202, USA}
\email{catherine.lebiedzik@wayne.edu}

\keywords{Polyharmonic functions, higher-order Poisson equations, mean-value theorems, Pizzetti formulas, iterated means, quantitative remainder.
\\
\indent 2020 {\it Mathematics Subject Classification:}
35J30, 
35J05, 
31B30, 
35B05. 
}
\date{}

\begin{abstract}
Higher-order Poisson equations involve an integer power of the Laplacian and a nonzero forcing term, and appear in areas of physics and engineering such as hydrodynamics, structural engineering, and image processing. 
We introduce a family of mean-value formulas for solutions to higher-order Poisson equations, given in terms of linear combinations of iterated means, with an exact remainder quantified by the oscillation of the forcing term.
We also prove a regularity result and a strong converse to the mean-value property, in the sense that merely locally integrable functions
satisfying our formulas are regular solutions of the higher-order
Poisson equation.
In contrast with the homogeneous case, where the mean-value property forces smoothness, the regularity attainable here is dictated by the regularity of the forcing term.
Together, our results provide a mean-value characterization of solutions to higher-order Poisson equations.
\end{abstract}

\maketitle

\section{Introduction}
It is a well-established fact that harmonic functions can be characterized through the mean-value property: a function is harmonic if and only if at all points its value matches its average over all small scales. That is, $\Delta u (x) =0$ in $\Omega\subset\mathbb{R}^n$ if and only if
$$
 u(x) = \dashint_{B_\varepsilon(x)} u(y)\,dy \ \ \mbox{for every } B_\varepsilon (x) \subset \Omega.
$$ 
The asymptotic mean-value property, see \cite{Blaschke,Ku,Privaloff}, allows an infinitesimal detachment between  $u$ and its average of order strictly smaller than $\varepsilon^2$ as $\varepsilon\to0$.
 For a regular $u$, a Taylor expansion~shows 
\[
\Delta u(x)=\lim_{\varepsilon \to0}\frac{2(n+2)}{\varepsilon^2}\left(\dashint_{B_\varepsilon(x)} u(y)\,dy-u(x)\right).
\]
Hence, for $f\in C(\Omega)$, a real function $u$ satisfies the Poisson equation $\Delta u=f$ in $\Omega$ if and only if 
\begin{equation}
\label{Laplaciano.formula.asintotica}
u(x)=\dashint_{B_\varepsilon (x)} u(y)\,dy-\frac{\varepsilon^2}{2(n+2)}\,f(x)+o(\varepsilon^2)\qquad\textrm{as} \ \varepsilon \to0
\end{equation}
 for each $x \in \Omega$.
 As usual, we use $o(\varepsilon^2)$ to denote a quantity 
such that $o(\varepsilon^2)/\varepsilon^2 \to 0 $
as $\varepsilon \to 0$.

We say that a function $u$ is $m$-harmonic in $\Omega$ (or polyharmonic of order $m$) if and only if $u\in C^{2m}(\Omega)$ and $\Delta^{m} u=0$ in $\Omega$, where $\Delta^{2}u=\Delta(\Delta u)$, $\Delta^{3}u=\Delta(\Delta^2 u)$ and so forth (see \cite{Aronszajn.et.al.1983,Gazzola.et.al.2010} for background on polyharmonic functions and operators).
Polyharmonic problems appear in linear elasticity, low-Reynolds-number hydrodynamics, and structural engineering (biharmonic case) \cite{Meleshko.2003}, in modeling ulcers and viscous fluids (triharmonic) \cite{Ugail.Wilson.2005,Lesnic.2009}, 
in the theory of octonionic functions and linear-plane strain of decagonal quasicrystals \cite{Burdik.et.al.2019,Li.et.al.1999} (quadri-harmonic), and in
shape optimization (quinti-harmonic) \cite{Kirmani.Jamil.2018}. 
 Many popular digital-image compression and reconstruction algorithms use polyharmonic functions (especially bi- and triharmonic) \cite{Barbu.2020,Chui.2009,Damelin.Hoang.2018,Kubiesa.et.al.2004,Ugail.2004}.
Polyharmonic splines, or thin plate splines, are also used in data interpolation, computer graphics, and image processing \cite{Madych.Nelson.1990}.

 From a practical standpoint, these models become all the more difficult to manage the higher the order of the equation (for instance, the quinti-harmonic equation is of tenth order). 
Generally speaking, mean-value formulas, known to characterize many linear and nonlinear PDEs (see \cite{[Blanc et al. 2021.other.ops]} and the references therein), hold under more lenient regularity conditions than the corresponding PDEs, which could lead to new algorithms by replacing the PDEs in these applications.

We find several different mean-value characterizations of polyharmonicity in the literature.
A celebrated formula due to Pizzetti, see \cite{Pizzetti.1909, O}, expresses the average of an arbitrary $C^{2m}$ function over a ball or sphere as a power series involving its iterated Laplacians. 
Namely, given $u\in C^{2m}(\Omega)$,
\begin{equation}\label{Pizzetti.formula.volume}
\dashint_{B_{\varepsilon}(x)}u(y)\,dy
=
u(x)+\sum_{k=1}^{m}c_{k}\,\Delta^{k}u(x)\,\varepsilon^{2k}+o(\varepsilon^{2m})
\quad\textrm{as} \ \varepsilon \to0,
\end{equation}
 for 
$c_k=1/\big(
2^{k} k! \prod_{j=1}^{k}(n+2j)\big)$. 
There is a corresponding formula for the spherical mean, which only differs from \eqref{Pizzetti.formula.volume} in the coefficients in the expansion, given by $d_k= c_k\, (n+2k)/n$.
Sbrana  proved the converse result in dimension $n=2$, see \cite{Sbr}, specifically, that if a function with finite, integrable derivatives of order $2m-2$ in a domain $\Omega$ satisfies Pizzetti's mean-value formula, then it is $m$-harmonic in $\Omega$.
Given that the Pizzetti-Sbrana mean-value property involved derivatives of the solution, subsequent research sought mean-value characterizations of polyharmonic functions that did not involve derivatives.
 In \cite{NM, Nicolesco.book.1936}, Nicolesco 
 proved  that 
 $u\in L_{loc}^{1}(\Omega)$ is $m$-harmonic in $\Omega$ if and only if for a.e.\ $x\in \Omega$ and a.e.\ $\varepsilon$ with $0<\varepsilon<\textnormal{dist}(x, \partial \Omega)$, it holds
\[
 u(x)
 \cdot
 \begin{vmatrix}
 1 & 1 & \ldots & 1\\
 1 & \frac{n}{n+2} & \ldots & \frac{n}{n+2(m-1)}\\
 \vdots & \vdots & \ddots & \vdots\\
 1 & \frac{n^{m-1}}{(n+2)^{m-1}} & \ldots & \frac{n^{m-1}}{(n+2m-2)^{m-1}}
 \end{vmatrix}
=
 \begin{vmatrix}
 \mu_{0}(u,x,\varepsilon) & 1 & \ldots & 1\\
 \mu_{1}(u,x,\varepsilon) & \frac{n}{n+2} & \ldots & \frac{n}{n+2(m-1)}\\
 \vdots & \vdots & \ddots & \vdots\\
 \mu_{m-1}(u,x,\varepsilon)& \frac{n^{m-1}}{(n+2)^{m-1}} & \ldots & \frac{n^{m-1}}{(n+2m-2)^{m-1}}
 \end{vmatrix},
\]
for the iterated means 
\[
\mu_{k}(u,x,\varepsilon)
=
\begin{cases}
\displaystyle\dashint_{\partial B_{\varepsilon}(x)}u\,d\mathcal{H}^{n-1}(y)&\textrm{for}\ k=0\vspace{5pt}\\
\displaystyle
\dashint_{ B_{\varepsilon}(x)}u\,dy=
\frac{n}{\varepsilon^{n}}\int_{0}^{\varepsilon} t^{n-1}\mu_{0}(u,t,x)\,dt
&\textrm{for}\ k=1\vspace{5pt}\\
\displaystyle\frac{n}{\varepsilon^{n}}\int_{0}^{\varepsilon} t^{n-1}\mu_{k-1}(u,t,x)\,dt&\textrm{for}\ k\geq2.
\end{cases}
\]
Alternatively, \cite{M,Fichera,Caramanica.1987,BHP,Caramuta.Cialdea.2014}
 proved different mean-value properties for $m$-harmonic functions, in which the mean-values are considered on concentric spheres. 
Further mean-value theorems for polyharmonic functions have been obtained in \cite{CM,Lysik.2011,Lysik.2012,Lysik.2015,Lysik.2016,LZ}, and others.

In \cite{CLR}, we introduced a new family of exact and asymptotic mean-value formulas (see Theorem \ref{thm.MVP.polyharmonic.intro} below) given by linear combinations of iterated ball averages, i.e.,
\begin{equation*}
A_\varepsilon^k[v](x) = A_\varepsilon\big[A_\varepsilon^{k-1}[v]\big](x)
\quad\textrm{for}\quad
A_\varepsilon[v](x) = \dashint_{B_\varepsilon(x)} v(y) dy
\quad\textrm{and}\quad
\ k=2,3\ldots
\end{equation*}
and similarly for the spherical averages 
\[
\mathcal{A}_\varepsilon^k[v](x) = \mathcal{A}_\varepsilon\big[\mathcal{A}_\varepsilon^{k-1}[v]\big](x)
\quad\textrm{for}\quad
\mathcal{A}_{\varepsilon}[v](x) = \dashint_{\partial B_{\varepsilon}(x)} v(y)\,d\mathcal{H}^{n-1}(y).
\]
Notice that for an iterated average of either kind to be well-defined, for every $x$, we must choose $\varepsilon$ small enough such that $\varepsilon<\textnormal{dist}(x,\partial\Omega)/k$.
With this notation, one can write the classical mean-value formula for harmonic functions as
$u(x)=A_\varepsilon[u](x)=\mathcal{A}_\varepsilon[u](x).$ 
Linear combinations of iterated averages can also be seen as a real polynomial on the averaging operator $A_\varepsilon$. From this perspective, 
the characterization in \cite{CLR} is driven by a simple algebraic condition. In order for a mean-value formula of the form
\[
 P(A_{\varepsilon})[u](x)= \sum_{k=0}^{m}a_{k}\,A_{\varepsilon}^k[u](x)=0
\]
to characterize polyharmonicity, $t=1$ must be a root of the polynomial $P(t)$, and its multiplicity as a root determines the degree of polyharmonicity of the function.

There are also asymptotic versions of these mean-value properties, where the property holds up to a `little-$o$' asymptotic error.
However, as pointed out in \cite{CLR}, when considering iterated means one has to require locally uniform limits.
\begin{definition}[{\rm $o(r^k)$ locally uniformly}] \label{loc_unif} We say
$f(x,r)=g(x,r)+o({r}^{k})$ as ${r} \to 0$ \emph{locally uniformly in $\Omega$} if for every
compact subset~$K\subset \Omega$, given $\eta>0$, there exists $r_0$ such that
\[
\operatorname*{ess\,sup}_{x\in K}|f(x,r)-g(x,r)|\leq\eta\, {r}^k\quad\textrm{for all}\ 0<r<r_0.
\]
\end{definition}

In its simplest case, the characterization of polyharmonic functions introduced in \cite{CLR} is as follows.
\begin{theorem}[Mean-value characterization of polyharmonic functions, \cite{CLR}]\label{thm.MVP.polyharmonic.intro}
 Let $\Omega\subset\mathbb{R}^n
$ be an open set, $m$ a positive integer, and $u:\Omega\to\R$ with $u \in L^1_\textnormal{loc}(\Omega)$.
 Then, the following are equivalent:
\begin{enumerate}\itemsep3pt
 \item $u$ is $m$-harmonic in $\Omega$, i.e., $u\in C^{2m}(\Omega)$ and $\Delta^mu=0$ in $\Omega$ in the classical sense.
 
\item For almost every $x\in\Omega$ and every ${\varepsilon}<\textnormal{dist}(x,\partial\Omega)/m$, it holds that
\begin{equation}\label{MVP.mharmonic.main.intro}
 (A_{\varepsilon}-I)^m[u](x)=0,
 \end{equation} 
 or, equivalently,
\begin{equation*}
 u(x)=\sum_{j=1}^{m}\binom{m}{j}(-1)^{j-1}A_{\varepsilon}^j[u](x).
 \end{equation*}
\item It holds that \begin{equation*}
 (A_{\eps}-I)^m[u](x)=o(\eps^{2m}),
\quad\textrm{as $\eps\to0$ locally uniformly in $\Omega$.}
 \end{equation*}

\end{enumerate}
The above equivalences also hold with the spherical average $\mathcal{A}_\eps$ in place of $A_{\varepsilon}$.
\end{theorem}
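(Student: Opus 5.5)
The plan is to prove the cycle (1)$\Rightarrow$(2)$\Rightarrow$(3)$\Rightarrow$(1), for the ball average; the spherical case follows the same scheme with $d_k$ in place of $c_k$.

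For (1)$\Rightarrow$(2), I will use Pizzetti's formula in its exact form for polyharmonic functions. If $\Delta^m u=0$, then the expansion \eqref{Pizzetti.formula.volume} terminates: $u$ is real analytic, so
\[
A_\eps[u](x)=\sum_{k=0}^{m-1}c_k\eps^{2k}\Delta^k u(x)
\]
holds exactly, with $c_0=1$. Let $L=\Delta$. Then $A_\eps = I + \sum_{k=1}^{m-1}c_k\eps^{2k}L^k$ as operators on $m$-harmonic functions, and $A_\eps$ commutes with $\Delta$. Hence $A_\eps-I=\eps^2 L\,Q(\eps^2L)$ for a polynomial $Q$, and
\[
(A_\eps-I)^m=\eps^{2m}L^m Q(\eps^2 L)^m .
\]
This vanishes on $u$ because $L^m u=0$ and every $\Delta^j u$ is again polyharmonic. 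The iterates make sense for $\eps<\mathrm{dist}(x,\partial\Omega)/m$.

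The implication (2)$\Rightarrow$(3) is trivial.

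For (3)$\Rightarrow$(1), which is the main step, I will proceed in four stages.

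\emph{Mollify.} Let $u_\delta=u*\rho_\delta$. Convolution commutes with $A_\eps$, and the locally uniform $o(\eps^{2m})$ bound passes to $u_\delta$ on compacts $K$, uniformly in $\delta$.

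\emph{Expand the smooth function.} For smooth $v$, expanding $A_\eps$ by Taylor/Pizzetti to order $2m$ gives
\[
(A_\eps-I)^m v=\eps^{2m}c_1^m\Delta^m v+o(\eps^{2m}).
\]
Applying this to $v=u_\delta$ forces $\Delta^m u_\delta=0$. So each $u_\delta$ is $m$-harmonic.

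\emph{Pass to the limit.} As $\delta\to0$, $u_\delta\to u$ in $L^1_{loc}$. Then $\Delta^m u=0$ in distributions, and Weyl's lemma for the elliptic operator $\Delta^m$ gives $u\in C^\infty$.

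\emph{Main obstacle.} Deriving the expansion of $(A_\eps-I)^m$ with a remainder that is uniform for the smooth functions involved. To avoid needing uniform bounds in $\delta$, I will fix $\delta$: $u_\delta$ is smooth and satisfies the hypothesis. The remainder bound then only requires $u_\delta\in C^{2m+1}$ on a neighborhood of $K$, and the $m$-fold iterate is controlled because $A_\eps$ is a contraction in sup norm.
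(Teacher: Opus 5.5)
Your argument is correct, but the paper contains no proof of this statement to compare it with. The theorem is quoted from \cite{CLR} and is used only as a black box, together with Theorem~\ref{thm.regularity}, in the proof of Proposition~\ref{prop.regularity}. Judged on its own, your cycle works, with one justification to repair.

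\textbf{(1)$\Rightarrow$(2).} The factorization is sound: on $\{\textnormal{dist}(\cdot,\partial\Omega)>m\eps\}$ you have $(A_\eps-I)^m[u]=\big(\sum_{k=1}^{m-1}c_k\eps^{2k}\Delta^k\big)^m u$, and every monomial contains $\Delta^j$ with $j\ge m$. The step to repair is ``$u$ is real analytic, so the expansion terminates exactly.'' Analyticity only yields the Pizzetti series for $\eps$ below the radius of convergence of the Taylor series at $x$, not for every $\eps<\textnormal{dist}(x,\partial\Omega)$. There are two easy fixes:
\begin{enumerate}
\item continue analytically in $\eps$, since $\eps\mapsto A_\eps[u](x)$ is real analytic on $(0,\textnormal{dist}(x,\partial\Omega))$; or
\item use Green's identity $\frac{d}{dr}\mathcal{A}_r[u](x)=\frac{r}{n}A_r[\Delta u](x)$ and induct on $m$, which gives the terminating formula on every ball contained in $\Omega$.
\end{enumerate}

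\textbf{(3)$\Rightarrow$(1).} Your mollify--expand--Weyl scheme is fine; the hypothesis transfers via $(A_\eps-I)^m[u_\delta]=(A_\eps-I)^m[u]\ast\rho_\delta$ on $\{\textnormal{dist}(\cdot,\partial\Omega)>m\eps+\delta\}$. The mollification is dispensable, though. Since $A_\eps$ has an even kernel, for $\varphi\in C^\infty_c(\Omega)$ you have $\int u\,(A_\eps-I)^m[\varphi]=\int (A_\eps-I)^m[u]\,\varphi=o(\eps^{2m})$. Meanwhile $(A_\eps-I)^m[\varphi]=c_1^m\eps^{2m}\Delta^m\varphi+O(\eps^{2m+2})$ uniformly, with supports in a fixed compact set. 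Hence $\Delta^m u=0$ in $\mathcal{D}'(\Omega)$ immediately. The same duality handles the spherical case, with $d_1^m$ in place of $c_1^m$. There, for $u\in L^1_{loc}$, read $\mathcal{A}_\eps[u]$ as the convolution with normalized surface measure on $\partial B_\eps(0)$, which is defined a.e.

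\textbf{Comparison with the paper.} The only argument of this kind in the paper is Proposition~\ref{prop.necessity.continuity}. It stays entirely at the level of averages: it writes $u=h_\eps+O(\eps^{2m})$ with $h_\eps$ a combination of iterated averages, and gets a continuous representative as a uniform limit. That route is elementary and uses nothing beyond the averaging operator, in the spirit of the non-Euclidean applications the paper has in mind, but it only yields continuity. Your route reaches $C^\infty$, indeed real analyticity, in one step. The price is importing distribution theory and the hypoellipticity of $\Delta^m$.
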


Unlike previous mean-value formulas in the literature, \eqref{MVP.mharmonic.main.intro} has a nice geometric interpretation consistent with the classical harmonic formula. For a harmonic function, the value at a point must coincide with the mean-value at neighboring points, while for biharmonic functions, the difference between the function and its average must match the difference between the first and second averages. 
For general $m$-harmonic functions, the value of $(A_\varepsilon-I)^{m-1}[u]$ and its average must coincide for all $\varepsilon$ small enough.

These mean-value formulas have potential applications in areas where polyharmonic equations arise (such as elasticity, plate theory, and image processing, mentioned above) as well as in non-Euclidean settings where iterated averaging is more natural than differential operators.
In fact, formulas such as \eqref{MVP.mharmonic.main.intro}
can provide a unified approach to polyharmonicity in diverse settings. For instance, it can provide a definition of $\Delta^m$ in non-Euclidean contexts such as Riemannian manifolds and Carnot groups, where one can make sense of $A_r$ for locally integrable functions.
Formula \eqref{MVP.mharmonic.main.intro} can also yield new ways to define polyharmonic functions in graphs by replacing $A_\varepsilon$ with a discrete average, which could lead to the development of numerical methods.

Interestingly, certain linear combinations of iterated averages of a merely locally integrable function force the function to be smooth. As shown in \cite{CLR}, seeing the linear combination as a polynomial in the averaging operator,  the corresponding mean-value property is smoothing when the polynomial has 1 as a root. In particular, we have the~following.

\begin{theorem}[Smoothness from mean-value formulas, \cite{CLR}]\label{thm.regularity}
 Let $\Omega\subset\mathbb{R}^n
$ be an open set, $m$ a positive integer, and $u:\Omega\to\R$.
 Assume that $u\in L^1_\textnormal{loc} (\Omega)$ satisfies the asymptotic mean-value property \begin{equation}\label{AMVP.regularity.intro}
 (A_{\eps}-I)^m[u](x)=o(\eps^{2m}) \quad
 \textrm{as}\ \eps\to0\ \textrm{locally uniformly in}\ \Omega.
 \end{equation}
Then $u\in\ C^\infty(\Omega)$.
 In particular, the same conclusion is true
when we replace \eqref{AMVP.regularity.intro} by
\[
 (A_{\eps}-I)^m[u](x)=0 \quad
 \textrm{for a.e.\ $x\in\Omega$ and every $\varepsilon<\frac{\textnormal{dist}(x,\partial\Omega)}{m}$.}
\]
The above results also hold with $\mathcal{A}_{\eps}$ in place of $A_{\varepsilon}$.

\end{theorem}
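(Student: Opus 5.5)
Since $(A_\eps-I)^m$ is a polynomial in the averaging operator with $t=1$ as a root of multiplicity $m$, the plan is to reduce to $m=1$ and then bootstrap regularity. The key structural fact is that $A_\eps$ is convolution with the normalized indicator $\chi_\eps=|B_\eps|^{-1}\mathbf 1_{B_\eps}$. It therefore commutes with translations and with mollification, and $A_\eps^k$ is convolution with the $k$-fold convolution power $\chi_\eps^{*k}$, which is radial and supported in $B_{k\eps}$.

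\textbf{Step 1: mollify.} Fix $\Omega'\Subset\Omega$ and a standard radial mollifier $\rho_\delta$, and set $u_\delta=u*\rho_\delta\in C^\infty(\Omega_\delta)$. Convolving the asymptotic property \eqref{AMVP.regularity.intro} with $\rho_\delta$ preserves it: the $o(\eps^{2m})$ bound is an ess sup over compacts, and averaging against $\rho_\delta$ does not increase it. Hence $(A_\eps-I)^m[u_\delta](x)=o(\eps^{2m})$ uniformly on compacts, now for a smooth function.

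\textbf{Step 2: identify the PDE.} For smooth $v$, the Taylor expansion, or Pizzetti's formula \eqref{Pizzetti.formula.volume} applied iteratively, gives
\[
(A_\eps-I)^m[v](x)=c_1^m\,\eps^{2m}\,\Delta^m v(x)+o(\eps^{2m}),
\]
because $A_\eps-I=c_1\eps^2\Delta+O(\eps^4)$ acting on smooth functions. Combining this with Step 1 yields $\Delta^m u_\delta=0$ classically in $\Omega_\delta$.

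\textbf{Step 3: pass to the limit.} The goal is $\Delta^m u=0$ in $\mathcal D'(\Omega)$. Since $u_\delta\to u$ in $L^1_{\rm loc}$ and each $u_\delta$ satisfies $\Delta^m u_\delta=0$, the limit is polyharmonic in the distributional sense. Weyl's lemma for the elliptic operator $\Delta^m$, which has constant coefficients and is hypoelliptic, then gives $u\in C^\infty(\Omega)$ after modification on a null set. For the exact identity, the property holds for all small $\eps$ at a.e. $x$, which is trivially $o(\eps^{2m})$ locally uniformly. One caveat: the identity is required only for $\eps<\mathrm{dist}(x,\partial\Omega)/m$, so on compacts it covers all sufficiently small $\eps$, and that is all Step 1 needs. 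The spherical case is identical, with $\mathcal A_\eps$ being convolution with normalized surface measure and the coefficients $d_k$ replacing $c_k$.

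\textbf{Main obstacle.} The delicate point is Step 1: the error term must remain uniformly small after mollification. This requires the ess-sup-over-compacts formulation of Definition \ref{loc_unif}. Pointwise a.e. asymptotics would not commute with convolution, which is exactly why the definition demands locally uniform limits. To handle it, I would write
\[
(A_\eps-I)^m[u_\delta](x)=\int\rho_\delta(x-z)\,(A_\eps-I)^m[u](z)\,dz,
\]
using Fubini and the commutation of convolutions, and bound the integrand by $\eta\,\eps^{2m}$ on the compact set $K+\overline{B_\delta}$.
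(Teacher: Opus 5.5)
Your argument is correct. The paper does not prove this statement: it is imported from \cite{CLR} and used only as a black box (in Proposition~\ref{prop.regularity} and the discussion after it), so there is no in-paper proof to compare with.

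Your route is sound. You transfer the locally uniform bound through mollification, identify $\Delta^m u_\delta=0$ on $\{\textnormal{dist}(\cdot,\partial\Omega)>\delta\}$ from the small-$\varepsilon$ expansion of smooth functions, pass to the distributional limit, and invoke hypoellipticity of $\Delta^m$. You also correctly isolate Step~1 as the only place where the ess-sup formulation of Definition~\ref{loc_unif} is indispensable. This is consistent with the pointwise counterexamples the paper recalls.

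\textbf{Step 2 needs one more line.} ``Pizzetti applied iteratively'' is not quite enough as written. After writing $(A_\varepsilon-I)v=c_1\varepsilon^2\Delta v+\varepsilon^4R_\varepsilon v$, you also need $(A_\varepsilon-I)^{m-1}[R_\varepsilon v]=O(\varepsilon^{2m-2})$. This holds because $A_\varepsilon$ commutes with derivatives, and $R_\varepsilon v$ has $C^k$ norms on compacts bounded uniformly in $\varepsilon$ (Taylor with integral remainder). More economically, you can quote \eqref{AMVP.f}, i.e.\ Theorem~\ref{general.main} applied to $u_\delta$ with $f=\Delta^m u_\delta$ continuous. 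Its proof does not use the present theorem, so there is no circularity.

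\textbf{An alternative ending.} If you want to avoid external elliptic regularity, you can finish with mean-value tools instead:
\begin{enumerate}
\item \eqref{MVP_1} with $f\equiv0$ gives $(A_\varepsilon-I)^m[u_\delta]=0$ exactly.
\item Letting $\delta\to0$ in $L^1_{\rm loc}$ gives $(A_\varepsilon-I)^m[u]=0$ a.e.\ for each small $\varepsilon$.
\item Then $u=\sum_{j=1}^m\binom{m}{j}(-1)^{j-1}A_\varepsilon^j[u]$ bootstraps regularity, since $A_\varepsilon$ maps $L^1_{\rm loc}$ into $C^0$ and $C^k$ into $C^{k+1}$ on slightly smaller sets.
\end{enumerate}
The first bootstrap step is exactly the Lebesgue-lemma mechanism the paper uses in Proposition~\ref{prop.necessity.continuity}. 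Your Weyl-lemma ending is shorter. The bootstrap ending is self-contained and shows along the way that the asymptotic property upgrades to the exact one.
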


Based on the above results and the rich applications of the homogeneous polyharmonic equation, a natural next question is to consider nonzero right-hand sides and develop mean-value properties for the higher-order Poisson equation $\Delta^mu=f$.
There are two main ways one can go about this generalization, by considering asymptotic mean-value formulas, where the formula holds up to a small error, or by considering exact formulas with a quantitative expression of the remainder. We pursue the latter in this manuscript.

The rest of the paper is structured as follows.
In Section \ref{main.results}, we state our main results: mean-value properties with exact remainder given in terms of the oscillation of the right-hand side for the classical and higher-order Poisson equations. These are found in Theorems~\ref{MVP.pointwise.Laplacian} and \ref{general.main}, and Corollaries \ref{cor.spherical.m=1} and \ref{cor.spherical.higher}.
We also provide a discussion of the results, compare asymptotic mean-value properties versus our exact mean-value properties with quantitative remainder, and prove a strong converse to the mean-value property.
In Section \ref{section.m.is.1}, 
we prove
Theorem \ref{MVP.pointwise.Laplacian} and
 Corollary \ref{cor.spherical.m=1}, while in Section \ref{section.m.greater.1}, we prove
Theorem  \ref{general.main}  and Corollary \ref{cor.spherical.higher}.

\section{Main results}\label{main.results}

We first show a mean-value property with exact remainder for the classical Poisson problem. We provide
two formulations of the mean-value property, given in \eqref{with_A} and \eqref{MVP_lap} below. Formula \eqref{with_A} has an integral right-hand side and holds pointwise. 
On the other hand, formula \eqref{MVP_lap} lines up with the
asymptotic mean-value property \eqref{Laplaciano.formula.asintotica}, while providing an explicit remainder in terms of the oscillation of $f$. Although no continuity of
$f$ is needed for the identity to hold, any modulus of continuity of
$f$ translates directly into an improvement of the order of the remainder.

\subsection{Exact mean-value formulas with quantitative remainder.}

Before stating our results, let us define the radial kernel \begin{equation}\label{K_def}
K_{\varepsilon}(r) 
= \frac{r}{n}\bigg( 1 - \left(\frac{r}{\eps}\right)^n\bigg)\qquad\textrm{for}\ 0\leq r\leq \eps,
\end{equation}
where we assume, here and in the sequel that $n\geq2$.
Observe that $K_{\varepsilon}(r)$ is zero both when $r=0$ and $r=\varepsilon$ and positive in between.
A key property of this kernel, central to our results below, is that its moments provide an integral representation of the Pizzetti coefficients in
\eqref{Pizzetti.formula.volume};
see Lemma~\ref{Kint} below. The simplest instance is the integral of the~kernel,
\[
\int_0^\eps K_\eps(r)\,dr
=\eps^{2}c_1=\frac{\eps^{2}}{2(n+2)},
\]
a constant the reader may recognize from 
\eqref{Laplaciano.formula.asintotica}.

\begin{theorem}\label{MVP.pointwise.Laplacian} 
Let $\Omega\subset\R^n$, $n\geq2,$ be open, $v, f :\Omega\to\R$ and $f\in L^\infty_{loc}(\Omega)$. Let  $K_{\varepsilon}$ be given by \eqref{K_def}.
Assume $v\in C^{1,1}(\Omega)$ satisfies
$\Delta v = f$ a.e.~in~$\Omega$. Then, the following mean-value property~holds 
\begin{equation}\label{with_A}
(A_\eps-I)[v](x)
=
\int_0^\eps A_r[f](x)\, K_{\varepsilon}(r)\,dr
\end{equation}
for all $x\in\Omega$ and every ${\varepsilon}<\textnormal{dist}(x,\partial\Omega)$. 
Alternatively, one can write
\begin{equation}\label{MVP_lap}
(A_\eps-I)[v](x)=\frac{f(x)}{2(n+2)}\varepsilon^2
+R_\varepsilon(x),
\end{equation}
for almost every $x\in\Omega$ and every ${\varepsilon}<\textnormal{dist}(x,\partial\Omega)$,
where 
\begin{equation}\label{R_MVP_lap}
R_\varepsilon(x)=
\int_{0}^{\varepsilon}
A_r \big[f(\cdot)-f(x)\big](x)
\,
K_{\varepsilon}(r)\,dr
=
\int_{0}^{\varepsilon}
\dashint_{B_r(x)} \big(f(y)-f(x)\big)\,dy
\,
K_{\varepsilon}(r)\,dr.
\end{equation}
In particular:
\begin{enumerate}
 \item If $f$ is continuous at $x$, then $R_\varepsilon(x)=o(\varepsilon^2)$ as $\varepsilon\to0$.
If $f \in C(\Omega)$, then \eqref{MVP_lap} holds pointwise in $\Omega$
 and $R_\varepsilon = o(\varepsilon^2)$
 locally uniformly in $\Omega.$

 \item If $f$ has modulus of continuity $\omega$, i.e.,
$$|f(x)-f(y)|\le\omega(|x-y|)\quad\textrm{for all $x,y\in\Omega$,}
$$
where $\omega:[0,\infty)\to[0,\infty)$ is non-decreasing with
$\lim_{t\to0^+}\omega(t)=0$, then
$$|R_\varepsilon(x)|\leq \frac{1}{2(n+2)}\,\omega(\varepsilon)\,\varepsilon^2\quad\textrm{for every}\ \varepsilon<\textnormal{dist}(x,\partial\Omega).$$
 
 \item If $f$ is H\"older or Lipschitz continuous with exponent $\alpha\in(0,1]$, then $R_\varepsilon=O(\varepsilon^{2+\alpha})$.
\end{enumerate}
\end{theorem}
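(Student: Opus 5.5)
The plan is to derive \eqref{with_A} from the classical Green-type identity for the spherical mean, and then integrate over radii. For $v\in C^{1,1}(\Omega)$, the spherical mean $\phi(t)=\mathcal{A}_t[v](x)$ is $C^1$ in $t$. By the divergence theorem, valid for $W^{2,\infty}_{loc}$ functions since $\nabla v$ is Lipschitz,
\[
\phi'(t)=\dashint_{\partial B_t(x)}\partial_\nu v\,d\mathcal{H}^{n-1}=\frac{t}{n}\dashint_{B_t(x)}\Delta v\,dy=\frac{t}{n}A_t[f](x).
\]
Integrating from $0$ gives $\mathcal{A}_s[v](x)-v(x)=\int_0^s \frac{t}{n}A_t[f](x)\,dt$. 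Next I will use $A_\eps[v](x)=\frac{n}{\eps^n}\int_0^\eps s^{n-1}\mathcal{A}_s[v](x)\,ds$, which yields
\[
(A_\eps-I)[v](x)=\frac{n}{\eps^n}\int_0^\eps s^{n-1}\int_0^s\frac{t}{n}A_t[f](x)\,dt\,ds .
\]
Applying Fubini (the integrand is bounded since $f\in L^\infty_{loc}$), the inner weight becomes $\frac{t}{n}\cdot\frac{n}{\eps^n}\int_t^\eps s^{n-1}ds=\frac{t}{n}\big(1-(t/\eps)^n\big)=K_\eps(t)$. This proves \eqref{with_A} for every $x$ and every $\eps<\textnormal{dist}(x,\partial\Omega)$.

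For \eqref{MVP_lap}, I will compute $\int_0^\eps K_\eps(r)\,dr=\frac{\eps^2}{2n}-\frac{\eps^2}{n(n+2)}=\frac{\eps^2}{2(n+2)}$. Then I subtract $f(x)\int_0^\eps K_\eps$ from \eqref{with_A}, using $A_r[f](x)-f(x)=A_r[f-f(x)](x)$. Here $f(x)$ is only defined a.e., which is why the formula is stated for a.e.\ $x$ (or for every $x$ if $f$ is taken as a fixed representative, e.g.\ continuous).

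The consequences follow from the positivity of $K_\eps$. We have $|R_\eps(x)|\le \sup_{r\le\eps}|A_r[f-f(x)](x)|\cdot\frac{\eps^2}{2(n+2)}$, and $|A_r[f-f(x)](x)|\le \sup_{B_r(x)}|f-f(x)|\le\omega(r)\le\omega(\eps)$ by monotonicity. This gives (2). Item (3) is the case $\omega(t)=Ct^\alpha$. For (1), pointwise continuity at $x$ makes the supremum tend to $0$. When $f\in C(\Omega)$, uniform continuity on a compact neighborhood of $K$ gives a uniform modulus, hence the locally uniform $o(\eps^2)$.

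The main obstacle is justifying $\phi'(t)$ under only $C^{1,1}$ regularity. I would handle it either by the divergence theorem for Lipschitz vector fields $\nabla v$, or by mollifying $v$: apply the identity to $v_\delta$ with $\Delta v_\delta=f*\rho_\delta$, and pass to the limit using uniform convergence of $v_\delta$ and the weak-$*$ (hence ball-average) convergence of $f*\rho_\delta\to f$.
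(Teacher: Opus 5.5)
Your argument is correct, but it follows a different route from the paper. The paper argues by duality. Lemma~\ref{lemma.w.delta.epsilon} constructs an explicit radial, compactly supported $C^{1,1}$ function $w_{\delta,\eps}$ with $\Delta w_{\delta,\eps}=\chi_\eps-\chi_\delta$, with separate formulas for $n\ge3$ and $n=2$. The Laplacian is then moved onto it via $f\ast w_{\delta,\eps}=v\ast\Delta w_{\delta,\eps}=v\ast(\chi_\eps-\chi_\delta)$. Finally, Lemma~\ref{lemma.limit.of.convolution} passes to the limit $\delta\to0$ by an integration by parts in polar coordinates: $K_\eps$ emerges from the derivative of the radial profile of $w_{\delta,\eps}$, and the contributions near the origin must be shown to vanish.

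You instead use the classical Green identity $\frac{d}{dt}\mathcal{A}_t[v](x)=\frac{t}{n}A_t[f](x)$ and then average over radii with weight $ns^{n-1}/\eps^n$, so that $K_\eps$ appears in a single Fubini step. Your route is shorter and treats all $n\ge2$ uniformly, with no logarithmic case and no $\delta\to0$ limit. It also yields the first identity in \eqref{MVP.spherical} along the way. In effect you run the paper's proof of Corollary~\ref{cor.spherical.m=1} backwards, since averaging with weight $ns^{n-1}/\eps^n$ inverts the operator $L_\eps=\frac{\eps}{n}\partial_\eps+1$ used there. Its only extra analytic input is the divergence theorem for the Lipschitz field $\nabla v$ on balls, which is standard; your mollification alternative also works.

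In exchange, the paper's approach represents $(A_\eps-I)[v](x)$ as the pairing of $\Delta v$ with an explicit Green-type kernel. It uses $v$ only through $\int\Delta v\,w_{\delta,\eps}=\int v\,\Delta w_{\delta,\eps}$ and its continuity at $x$. Your passage to \eqref{MVP_lap}, and the estimates (1)--(3) via $K_\eps\ge0$ and $\int_0^\eps K_\eps\,dr=\eps^2/(2(n+2))$, coincide with the paper's.
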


\begin{remark}
In the notation $f(\cdot)-f(x)$ in \eqref{R_MVP_lap}, $f(x)$ denotes the value of the function at the center of the ball $B_r(x),$ and is constant relative to the average. The average takes place over the integration variable $y$, which is suppressed in the $A_r$ notation.
\end{remark}

\begin{remark}
The hypothesis $f \in L^\infty_{loc}(\Omega)$ can be seen as a compatibility condition, since, by Rademacher's theorem (applied to $\nabla v$), the second derivatives of $v$ exist a.e.\ and
are locally bounded, which implies
$\Delta v \in L^\infty_{loc}(\Omega)$.
Therefore, \eqref{with_A} holds at all points $x\in\Omega$ because the right-hand side sees $f$ only through integrals, and is defined
and finite pointwise.
On the other hand,
\eqref{MVP_lap}, 
which evaluates $f\in L^\infty_{loc}(\Omega)$ at $x$,
holds only almost everywhere.
\end{remark}

We can derive spherical mean-value formulas from the ball-average ones in Theorem \ref{MVP.pointwise.Laplacian}. \begin{corollary}\label{cor.spherical.m=1}
Let $\Omega$, $f$, and $v$ be as in Theorem \ref{MVP.pointwise.Laplacian}.
 Then, for every $x\in\Omega$ and every
$0<\eps<\textnormal{dist}(x,\partial\Omega)$,
\begin{equation}\label{MVP.spherical}
(\mathcal{A}_{\eps}-I)[v](x)
=\int_0^{\eps} A_r[f](x)\,\frac{r}{n}\,dr
=\int_0^{\eps}\mathcal{A}_r[f](x)\,\mathcal{K}_\eps(r)\,dr,
\end{equation}
for the kernel
\vspace{-3pt}
\begin{equation}\label{ K.spherical.def}
\vspace{+3pt}
\mathcal{K}_\eps(r)=
\begin{cases}
\displaystyle \frac{r}{n-2}\bigg(1-\Big(\frac{r}{\eps}\Big)^{n-2}\bigg)
&\textrm{if }n\geq3,\\[10pt]
\displaystyle r\log\Big(\frac{\eps}{r}\Big)&\textrm{if }n=2.
\end{cases}
\end{equation}
Moreover, for a.e.\ $x\in\Omega$ and every
$\eps<\textnormal{dist}(x,\partial\Omega)$,
\begin{equation}\label{MVP_lap.spherical}
(\mathcal{A}_{\eps}-I)[v](x)
=
\frac{f(x)}{2n}\,\eps^2+\mathcal{R}_\eps(x),
\end{equation}
where
\begin{equation}\label{R_MVP_lap.spherical}
\mathcal{R}_\eps(x)
=\int_0^{\eps} A_r\big[f(\cdot)-f(x)\big](x)\,\frac{r}{n}\,dr
=\int_0^{\eps}\mathcal{A}_r\big[f(\cdot)-f(x)\big](x)\,
\mathcal{K}_{\eps}(r)\,dr.
\end{equation}
The analogues of statements (1)--(3) in
Theorem \ref{MVP.pointwise.Laplacian} hold 
with the constant $\frac{1}{2(n+2)}$ replaced by $\frac{1}{2n}$.
\end{corollary}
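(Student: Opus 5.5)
The plan is to derive the spherical formula from the ball formula \eqref{with_A} by differentiating in $\eps$, since $A_\eps[v](x)=\frac{n}{\eps^n}\int_0^\eps t^{n-1}\mathcal{A}_t[v](x)\,dt$. Set $\phi(\eps)=(A_\eps-I)[v](x)$ and $\psi(\eps)=(\mathcal{A}_\eps-I)[v](x)$. Then $\eps^n\phi(\eps)=n\int_0^\eps t^{n-1}\psi(t)\,dt$. Because $v\in C^{1,1}$ is continuous, $\psi$ is continuous in $\eps$, so
\[
\frac{d}{d\eps}\big(\eps^n\phi(\eps)\big)=n\eps^{n-1}\psi(\eps),
\qquad\textrm{i.e.}\qquad
\psi(\eps)=\phi(\eps)+\frac{\eps}{n}\phi'(\eps).
\]

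Next I will compute $\phi'$ from \eqref{with_A}. Write $\phi(\eps)=\int_0^\eps A_r[f](x)\frac{r}{n}\,dr-\eps^{-n}\int_0^\eps A_r[f](x)\frac{r^{n+1}}{n}\,dr$. The map $r\mapsto A_r[f](x)$ is continuous on $(0,\eps]$ and bounded, since $f\in L^\infty_{loc}$ and absolute continuity of the integral applies. Hence both integrals are differentiable in $\eps$. The boundary terms are $A_\eps[f]\frac{\eps}{n}$ and $-A_\eps[f]\frac{\eps}{n}$, and they cancel. This leaves
\[
\phi'(\eps)=\frac{n}{\eps^{n+1}}\int_0^\eps A_r[f](x)\frac{r^{n+1}}{n}\,dr .
\]
Therefore $\frac{\eps}{n}\phi'(\eps)=\eps^{-n}\int_0^\eps A_r[f]\frac{r^{n+1}}{n}\,dr$. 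Adding this to $\phi$ gives $\psi(\eps)=\int_0^\eps A_r[f](x)\frac rn\,dr$, which is the first equality in \eqref{MVP.spherical}.

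For the second equality, I substitute $A_r[f](x)=\frac{n}{r^n}\int_0^r s^{n-1}\mathcal{A}_s[f](x)\,ds$ and apply Fubini. This yields $\int_0^\eps \mathcal{A}_s[f](x)\, s^{n-1}\int_s^\eps r^{1-n}\,dr\,ds$. The inner integral equals $\frac{s^{2-n}-\eps^{2-n}}{n-2}$ for $n\ge3$ and $\log(\eps/s)$ for $n=2$, which gives exactly $\mathcal{K}_\eps(s)$. Since $\mathcal{A}_s[f]$ is defined only for a.e.\ $s$ when $f\in L^\infty_{loc}$, this requires some care. Fubini is justified because $f$ is bounded near $x$ and $\mathcal{K}_\eps$ is integrable.

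For \eqref{MVP_lap.spherical}, I apply the first formula with $f(x)$ split off, using $\int_0^\eps \frac rn\,dr=\frac{\eps^2}{2n}$. This is valid at every Lebesgue point, hence a.e. The same computation with a constant shows $\int_0^\eps\mathcal{K}_\eps=\frac{\eps^2}{2n}$, which gives the two expressions for $\mathcal{R}_\eps$. The analogues of (1)–(3) then follow exactly as in Theorem \ref{MVP.pointwise.Laplacian}, using $|A_r[f-f(x)](x)|\le\omega(r)\le\omega(\eps)$ against the positive weight $r/n$ of total mass $\eps^2/(2n)$.

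The main obstacle I expect is justifying the differentiation step. This means confirming that $\psi$ is continuous, so the fundamental theorem of calculus gives the derivative identity for every $\eps$ rather than a.e. It also means confirming that $r\mapsto A_r[f](x)$ is continuous, so that the boundary terms make sense pointwise. If the continuity needed causes trouble, I will instead argue for a.e.\ $\eps$ and extend to all $\eps$, since both sides are continuous in $\eps$.
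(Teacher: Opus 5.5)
Your proposal is correct and follows essentially the same route as the paper. Your identity $\psi=\phi+\frac{\eps}{n}\phi'$ is exactly the paper's operator $L_\eps=\frac{\eps}{n}\partial_\eps+1$, obtained from the polar decomposition of $A_\eps$, and your term-by-term differentiation of the split kernel is the paper's Leibniz-rule computation using $\partial_\eps K_\eps(r)=r^{n+1}/\eps^{n+1}$ and $K_\eps(\eps)=0$; the second form of \eqref{MVP.spherical}, \eqref{MVP_lap.spherical}, and the analogues of (1)--(3) then come from the same Fubini exchange and the same estimates as in Theorem \ref{MVP.pointwise.Laplacian}, with your continuity checks on $r\mapsto A_r[f](x)$ and on the spherical means of $v$ making explicit what the paper leaves implicit.
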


\begin{remark}\label{remark.K.mathcal.K.dimension.intro}
When $n\geq3$ we can obtain $\mathcal{K}_\eps$ from
$K_\eps$ upon replacing $n$ by $n-2$; the case $n=2$ arises as the
limit as $n\to2^+$. 
\end{remark}

Our main result extends Theorem \ref{MVP.pointwise.Laplacian} to the higher-order Poisson equation $\Delta^mu=f$, for $m$ a positive integer. 
Recall that Theorem \ref{MVP.pointwise.Laplacian} provides two forms of the mean-value property, \eqref{with_A}, in which the right-hand side is a weighted integral of ball averages of $f$, and \eqref{MVP_lap}, in which the right-hand side features the pointwise value 
$f(x)$ plus a remainder given by a weighted integral in $r$ of the averaged increments $(A_r - I)[f](x)$.
For $m>1$, the integral form \eqref{with_A} extends to an $m$-dimensional weighted integral of $m$ iterated ball averages of $f$.
The extension of the pointwise form \eqref{MVP_lap}, by contrast, involves a combination of all averaged increments $(A_\eps-I)^j[f](x)$ with $0\leq j\leq m-1$ (instead of just $f(x)$ and $(A_r-I)[f](x)$),
along with coefficients assembled from the Pizzetti constants.

We now introduce the combinatorial coefficients that organize the
higher-order remainder expansion; the interested reader is referred to
\cite[Section 1.2]{Stanley.2012}.
For integers $1\le k\le m$, let $\mathcal{C}(m,k)$ denote the set of
\emph{compositions} of $m$ into $k$ positive parts, i.e., ordered tuples
$\alpha=(\alpha_1,\dots,\alpha_k)\in\mathbb{N}^k$ with $\alpha_i\ge1$ and
$\alpha_1+\cdots+\alpha_k=m$.
Given $\alpha\in\mathcal{C}(m,k)$, we write $\sigma_i=\alpha_1+\cdots+\alpha_i$ for the
partial sums (with $\sigma_0=0$ by convention, and $\sigma_k=m$). Then, the intervals of
integers
\[
I_i=\{\sigma_{i-1}+1,\dots,\sigma_i\},\qquad 1\le i\le k,
\]
partition $\{1,\dots,m\}$ into $k$ consecutive blocks, the $i$-th block
containing $\alpha_i$ elements. Within the $i$-th block, the $j$-th
element is $\sigma_{i-1}+j$. In particular,
\begin{equation}\label{block.bijection}
\begin{split}
(i,j)\longmapsto \sigma_{i-1}+j
\quad&\textrm{is a bijection from}\\
&\big\{(i,j):\,1\le i\le k,\ 1\le j\le\alpha_i\big\}
\quad\textrm{onto}\quad \{1,\dots,m\},
\end{split}
\end{equation}
since, for each $1\le i\le k$ and $1\le j\le\alpha_i$, we have
$\sigma_{i-1}+j\in I_i\subset\{1,\dots,m\}$, and, conversely, each
$q\in\{1,\dots,m\}$ lies in exactly one block $I_i$, where it occupies exactly
one position $j=q-\sigma_{i-1}$.

For every $1\le k\le m$ and $r=(r_1,\dots,r_m)\in[0,\eps]^m$, we define
\begin{equation}\label{B_def}
b_{\eps}^{m,k}(r)
=\sum_{\alpha\in\mathcal{C}(m,k)}\;
\prod_{i=1}^{k}\prod_{j=1}^{\alpha_i}
\left(\frac{r_{\sigma_{i-1}+j}}{\eps}\right)^{2j-2}.
\end{equation}
Note that
$b^{m,m}_\eps\equiv1$ (a single composition with all blocks of length one, so every factor is 1), while $b^{m,1}_\eps(r)=\prod_{q=1}^m\left(r_q/\eps\right)^{2q-2}$
(a single block). 
Classifying the compositions in \eqref{B_def} according to the length
$p\in\{1,\dots,m-k+1\}$ of their last block yields the recursion
\[
b^{m,k}_\eps(r_1,\dots,r_m)
=\sum_{p=1}^{m-k+1}
b^{m-p,\,k-1}_\eps(r_1,\dots,r_{m-p})
\prod_{j=1}^{p}\left(\frac{r_{m-p+j}}{\eps}\right)^{2j-2},
\qquad b^{m,m}_\eps\equiv1,
\]
convenient for generating low-order coefficients; e.g., from
$b^{1,1}_\eps\equiv1$ and $b^{2,1}_\eps(r)=(r_2/\eps)^2$, it produces
$b^{3,2}_\eps(r)=\left(r_2/\eps\right)^{2}+\left(r_3/\eps\right)^{2}$.
We will discuss further properties of the coefficients $b^{m,k}_{\eps}$
in Lemma~\ref{lemma.B.properties}~below.

We are ready to state our main result. To simplify the notation, and emphasize the parallel with Theorem \ref{MVP.pointwise.Laplacian}, we will use multi-index notation. In particular, we will denote $dr=dr_1dr_2\ldots dr_m$ and $A_{r}[f]=\big(A_{r_1}\!\cdots A_{r_{m}}\big)[f]$ for the composition of averages.
We have the following.

\begin{theorem}\label{general.main}
Let $\Omega\subset\R^n$, $n\geq2,$ be open, $u, f:\Omega\to\R$ and $f\in L^\infty_{loc}(\Omega)$.
Assume $u\in C^{2m-1,1}(\Omega)$ satisfies $\lap^m u=f$ a.e.\ in $\Omega$. Then the following mean-value property holds for 
every $x\in\Omega$ and every ${\varepsilon}<\textnormal{dist}(x,\partial\Omega)/m$:
\begin{equation}\label{MVP_1}
(A_\eps-I)^{m}[u](x) 
= \int_{[0,\eps]^m} A_{r}[f](x)\cdot\prod_{i=1}^{m}K_\eps(r_i)\,dr,
\end{equation}
for $K_\eps$ given by \eqref{K_def}.
Equivalently, for 
almost every $x\in\Omega$ and every ${\varepsilon}<\textnormal{dist}(x,\partial\Omega)/m$, it holds
\begin{equation}\label{MVP_2}
(A_\eps-I)^m[u](x)=
\varepsilon^{2m}\sum_{l=0}^{m-1} \tilde{c}_{l}\,(A_\eps -I)^l[f](x) 
+R_\varepsilon(x),
\end{equation}
for the constants $\tilde{c}_l$ defined as follows in terms of the Pizzetti coefficients
\begin{equation}\label{pizz_c}
\tilde{c}_l=\sum_{\alpha\in\mathcal{C}(m,m-l)}\;
\prod_{i=1}^{m-l} c_{\alpha_i},
\qquad\textrm{for}\quad
c_\alpha=\prod_{j=1}^\alpha\frac{1}{(2j)(n+2j)},
\end{equation}
and the remainder
\begin{equation}\label{R_MVP}
R_\varepsilon(x)=
\int_{[0,\eps]^m}
\bigg( A_{r}-\sum_{l=0}^{m-1} b_{\eps}^{m,m-l}(r)\,(A_\eps-I)^l\bigg)[f](x)\cdot
\prod_{i=1}^{m}K_\eps(r_i)\,dr,
\end{equation}
with the coefficients $b_{\eps}^{m,k}(r)$ given by \eqref{B_def}.
In particular:
\begin{enumerate}
 \item If $f$ is continuous at $x$, then $R_\varepsilon(x)=o(\varepsilon^{2m})$ 
 as $\varepsilon\to0$. If $f \in C(\Omega)$, then \eqref{MVP_2} holds pointwise 
 in $\Omega$ and $R_\varepsilon = o(\varepsilon^{2m})$ locally uniformly in $\Omega$.
 
 \item If $f$ has modulus of continuity $\omega$, i.e.,
 $$|f(x)-f(y)|\le\omega(|x-y|)\quad\textrm{for all $x,y\in\Omega$,}$$
 where $\omega:[0,\infty)\to[0,\infty)$ is non-decreasing with
 $\lim_{t\to0^+}\omega(t)=0$, then
 $$|R_\varepsilon(x)|
 \leq 
 \frac{3^{m-1}-2^{m-1}+1}{\big(2(n+2)\big)^m}\,\omega(m\varepsilon)\,\varepsilon^{2m}
 \quad\textrm{for every}\ \varepsilon<\textnormal{dist}(x,\partial\Omega)/m.$$
 
 \item If $f$ is H\"older or Lipschitz continuous with exponent 
 $\alpha\in(0,1]$, then $R_\varepsilon=O(\varepsilon^{2m+\alpha})$.
\end{enumerate}
\end{theorem}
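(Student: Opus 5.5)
Our plan is to prove \eqref{MVP_1} by induction on $m$, using Theorem~\ref{MVP.pointwise.Laplacian} as both the base case and the inductive engine. The key observation is that $A_\eps$ commutes with $\Delta$ (translation invariance): for $w\in C^{1,1}$ one has $\Delta A_\eps[w]=A_\eps[\Delta w]$ a.e., and $A_\eps$ preserves the regularity classes $C^{k,1}$ on the shrunken domain $\{x:\operatorname{dist}(x,\partial\Omega)>\eps\}$.

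We apply Theorem~\ref{MVP.pointwise.Laplacian} to $v=(A_\eps-I)^{m-1}[u]$. This function lies in $C^{1,1}$, in fact $C^{2m-1,1}$, on $\{\operatorname{dist}>(m-1)\eps\}$, and it satisfies
\[
\Delta v=\Delta(A_\eps-I)^{m-1}[u]=(A_\eps-I)^{m-1}[\Delta u].
\]
Apply the induction hypothesis to $w=\Delta u\in C^{2m-3,1}$, which satisfies $\Delta^{m-1}w=f$. This expresses $(A_\eps-I)^{m-1}[\Delta u]$ as the $(m-1)$-fold integral $\int A_{r'}[\,\cdot\,]\prod K_\eps$. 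That representation, however, involves $\Delta u$ rather than $f$. So we organize the induction the other way:
\[
(A_\eps-I)^m[u]=(A_\eps-I)\big[(A_\eps-I)^{m-1}u\big].
\]
Theorem~\ref{MVP.pointwise.Laplacian} applied to $V=(A_\eps-I)^{m-1}u$ gives $\int_0^\eps A_{r_m}[\Delta V]K_\eps(r_m)\,dr_m$. Here $\Delta V=(A_\eps-I)^{m-1}[\Delta u]$, and by induction (with $\Delta u$ solving $\Delta^{m-1}(\Delta u)=f$) this equals $\int_{[0,\eps]^{m-1}}A_{r'}[f]\prod K_\eps\,dr'$, pointwise on the smaller domain. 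Substituting and using Fubini together with the commutativity $A_{r_m}A_{r'}=A_{(r',r_m)}$ yields \eqref{MVP_1}. Domain bookkeeping requires $\eps<\operatorname{dist}/m$, since the averages extend over radius at most $m\eps$.

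For \eqref{MVP_2}, the remainder formula \eqref{R_MVP} is simply a definition, so the identity reduces to the moment computation
\[
\int_{[0,\eps]^m}b^{m,m-l}_\eps(r)\prod K_\eps(r_i)\,dr=\eps^{2m}\tilde c_l .
\]
This follows from Lemma~\ref{Kint}: the moments $\int_0^\eps (r/\eps)^{2j-2}K_\eps(r)\,dr$ should equal $\eps^2\frac{1}{2j(n+2j)}$, so each block of length $\alpha$ contributes the product $\eps^{2\alpha}c_\alpha$, and summing over compositions gives $\tilde c_l$. The identity then holds wherever $f(x)$ is defined, i.e.\ a.e.

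The estimates (1)--(3) are where the real work lies, and we expect the main obstacle to be the algebraic step of rewriting the integrand of \eqref{R_MVP} as a combination of \emph{increments}. The plan is to write $A_{r_i}=I+(A_{r_i}-I)$ and expand the product. We then compare each $A_{r_i}-I$ with $(r_i/\eps)^{2}$-type multiples of $A_\eps-I$ via the Pizzetti structure, so that $A_r-\sum_l b^{m,m-l}_\eps(r)(A_\eps-I)^l$ applied to $f$ becomes a sum of terms $(\text{product of averages})[f(\cdot)-f(x)]$ with nonnegative or bounded coefficients. Each such term is bounded by $\omega(m\eps)$, because all points lie within $m\eps$ of $x$ and averages of $f(\cdot)-f(x)$ are controlled by the oscillation. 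Summing the absolute coefficient masses against $\prod K_\eps$, whose total is $(\eps^2/(2(n+2)))^m$, should produce the constant $3^{m-1}-2^{m-1}+1$. If the direct bookkeeping is unwieldy, we will first verify that the operator annihilates constants, i.e.\ its total coefficient mass is zero, which already gives the $o(\eps^{2m})$ statement (1) and the $O(\eps^{2m+\alpha})$ statement (3) with some constant. We then refine the count to get the sharp constant in (2). Local uniformity in (1) follows from uniform continuity of $f$ on compacts.
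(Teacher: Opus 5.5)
Your derivation of \eqref{MVP_1} is correct and differs only mildly from the paper's. You apply the $m=1$ formula \eqref{with_A} to the outermost factor, i.e.\ to $V=(A_\eps-I)^{m-1}[u]$ on $\{\operatorname{dist}(\cdot,\partial\Omega)>(m-1)\eps\}$, and use $\Delta A_\eps=A_\eps\Delta$. The paper instead applies \eqref{with_A} to $u$ itself with source $\Delta u$, then pushes $(A_\eps-I)^{m-1}$ through the $r_m$-integral using Lemma~\ref{commute}. Both need one Fubini step and the same domain bookkeeping. Your worry that the induction hypothesis ``involves $\Delta u$ rather than $f$'' is unfounded: applied to $w=\Delta u$ it already produces $f$, and the route you settle on is fine. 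Your reduction of \eqref{MVP_2} to the moment identity \eqref{B_integral.tilde.c's} via Lemma~\ref{Kint} is exactly the paper's.

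The gap is in the estimates, specifically the explicit constant in (2). Your primary plan is to expand $A_{r_i}=I+(A_{r_i}-I)$ and compare $A_{r_i}-I$ with $(r_i/\eps)^2$-multiples of $A_\eps-I$ ``via the Pizzetti structure''. This does not work. Pizzetti's expansion is an asymptotic identity for $C^{2k}$ functions, and it gives no pointwise relation between $(A_{r_i}-I)[f](x)$ and $(A_\eps-I)[f](x)$ when $f$ merely has a modulus of continuity. It is also unnecessary: no cancellation between $A_r$ and the correction terms is needed beyond the fact that $b^{m,m}_\eps\equiv1$ pairs $A_r[f](x)$ with $f(x)$.

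Your fallback is the right idea: the operator annihilates constants, has bounded coefficients, and samples $f$ only in $B_{m\eps}(x)$. It does give (1) and (3). But ``refine the count'' leaves the constant $3^{m-1}-2^{m-1}+1$ unproved. Two explicit bounds are missing:
\begin{itemize}
\item $0\le b^{m,m-l}_\eps(r)\le|\mathcal{C}(m,m-l)|=\binom{m-1}{l}$ on $[0,\eps]^m$. This holds because every factor in \eqref{B_def} is at most $1$, and compositions of $m$ into $k$ parts correspond bijectively to $(k-1)$-subsets of $\{2,\dots,m\}$.
\item $|(A_\eps-I)^l[f](x)|\le(2^l-1)\,\omega(l\eps)$. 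This follows by rewriting $(A_\eps-I)^l=\sum_{j=1}^{l}\binom{l}{j}(-1)^{l-j}(A_\eps^j-I)$, which is valid because the binomial coefficients sum to zero.
\end{itemize}
With these, split the integrand of \eqref{R_MVP} as $\big(A_r[f](x)-f(x)\big)-\sum_{l=1}^{m-1}b^{m,m-l}_\eps(r)(A_\eps-I)^l[f](x)$. Then use $\int_{[0,\eps]^m}\prod_{i=1}^m K_\eps(r_i)\,dr=\eps^{2m}/(2(n+2))^m$ and the identity $1+\sum_{l=1}^{m-1}\binom{m-1}{l}(2^l-1)=3^{m-1}-2^{m-1}+1$, and (2) follows directly. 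If you bound the total variation of $(A_\eps-I)^l$ by $2^l$ instead of $2^l-1$, you only get the weaker constant $3^{m-1}$.
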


Let us describe the heuristics of how the expansion \eqref{MVP_2} and coefficients \eqref{B_def} and \eqref{pizz_c} arise.
By Pizzetti's
formula \eqref{Pizzetti.formula.volume}, one can write the operator $A_\eps-I$ as a formal power series in the Laplacian as follows, 
\begin{equation}\label{formal.expansion}
A_\eps-I
=
\sum_{\alpha\ge1}c_\alpha\,(\eps^{2}\Delta)^{\alpha},
\end{equation}
for $c_\alpha$ the Pizzetti coefficients in \eqref{pizz_c}. 
From here, one can derive a formal expression for $(A_\eps-I)^{m}$ by taking
the $m$-th power on both sides of \eqref{formal.expansion}. Expanding the
right-hand side, collecting terms, and using the equation $\Delta^{m}u=f$,
one would expect to obtain a formula with the structure of \eqref{MVP_2}.
Compositions of $m$ arise at exactly this point.
 Indeed, splitting $(A_\eps-I)^{m}=(A_\eps-I)^{l}(A_\eps-I)^{m-l}$ for 
$0\le l\le m-1$, and
expanding only the last $m-l$ factors using \eqref{formal.expansion}, we see that each of
the factors contributes one term $c_{\alpha_i}(\eps^{2}\Delta)^{\alpha_i}$, leading to the
products $c_{\alpha_1}\cdots c_{\alpha_{m-l}} $, indexed by the ordered
tuples $(\alpha_1,\dots,\alpha_{m-l})$, which in turn lead to the coefficients $\tilde{c}_l$ in \eqref{pizz_c}.
We note that these coefficients are, in fact, ordinary partial Bell
polynomials evaluated at the Pizzetti coefficients,
$\tilde c_l=\hat B_{m,m-l}(c_1,c_2,\dots)$. This is to be expected,
since these polynomials are defined precisely as the coefficients
of an integer power of a power series with no constant term; see \cite[Sec.~3.3]{Comtet.1974}.
Now, to parallel the structure of Theorem
\ref{MVP.pointwise.Laplacian}, where the remainder \eqref{R_MVP_lap} is
obtained by subtracting the value $f(x)$ under the weighted integral
in \eqref{with_A}, we need, so to speak, a primitive of each
$\tilde c_l$ with respect to the kernel $\prod_{i=1}^m K_\eps(r_i)$, meaning a function of $r$ whose integral against the
kernel over $[0,\eps]^m$ returns $\eps^{2m}\,\tilde c_l$. The coefficients
$b^{m,m-l}_{\eps}(r)$ defined in \eqref{B_def} play precisely this role,
thanks to the integral representation of the
Pizzetti coefficients; see
Lemmas \ref{Kint} and \ref{lemma.B.properties} below.

We point out that in Theorem \ref{general.main}, one can consider different radii for each of the averaging operators in \eqref{MVP_1}.

\begin{proposition}\label{prop.multiradius}
Let $\Omega$, $f$, and $u$ be as in Theorem~\ref{general.main}. Then, for every
$x\in\Omega$ and every tuple of positive radii
$(\varepsilon_1,\dots,\varepsilon_m)$ with
$\varepsilon_1+\cdots+\varepsilon_m<\textnormal{dist}(x,\partial\Omega)$, we have
\begin{equation}\label{eq.multiradius} 
\prod_{i=1}^{m}\big(A_{\varepsilon_i}-I\big)[u](x)
=\int_0^{\varepsilon_1}\cdots\int_0^{\varepsilon_m}
\big(A_{r_1}\cdots A_{r_m}\big)[f](x)\,
\prod_{i=1}^{m}K_{\varepsilon_i}(r_i)\,dr_m\ldots dr_1.
\end{equation}
\end{proposition}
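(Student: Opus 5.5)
The plan is to prove \eqref{eq.multiradius} by induction on $m$, using Theorem~\ref{MVP.pointwise.Laplacian}, i.e.\ formula \eqref{with_A}, as both the base case and the inductive step. Two structural facts carry the argument: the operators $A_{\varepsilon_i}$ and $A_{r_j}$ commute with each other and with $\Delta$ (in the weak/a.e.\ sense), and averaging preserves the relevant regularity classes.

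For $m=1$ the statement is exactly \eqref{with_A} with $v=u\in C^{1,1}$ and $\varepsilon=\varepsilon_1$. For the inductive step, assume the identity holds for $m-1$ and set
\[
w=\prod_{i=2}^{m}(A_{\varepsilon_i}-I)[u],
\]
which is defined on $\Omega'=\{x:\ \textnormal{dist}(x,\partial\Omega)>\varepsilon_2+\cdots+\varepsilon_m\}$.

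\textbf{Step 1: properties of $w$.} Since $A_\varepsilon$ is convolution with $\chi_{B_\varepsilon}/|B_\varepsilon|$, it commutes with weak derivatives, and $u\in C^{2m-1,1}$ gives $w\in C^{2m-1,1}(\Omega')\subset C^{1,1}(\Omega')$. Moreover
\[
\Delta w=\prod_{i=2}^m(A_{\varepsilon_i}-I)[\Delta u] \quad\text{a.e.}
\]
Here $\Delta u\in C^{2m-3,1}$ and $\Delta^{m-1}(\Delta u)=f$ a.e. Applying the induction hypothesis to $\Delta u$ with radii $\varepsilon_2,\dots,\varepsilon_m$ gives, for every $y\in\Omega'$,
\[
\Delta w(y)=\int_0^{\varepsilon_2}\!\!\cdots\int_0^{\varepsilon_m}(A_{r_2}\cdots A_{r_m})[f](y)\prod_{i\ge2}K_{\varepsilon_i}(r_i)\,dr=:g(y).
\]
This holds pointwise where $\Delta w$ exists, hence a.e. Since $f\in L^\infty_{loc}$, $g\in L^\infty_{loc}(\Omega')$.

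\textbf{Step 2: apply the base case and exchange integrals.} Apply \eqref{with_A} to $w$ at $x$ with radius $\varepsilon_1<\textnormal{dist}(x,\partial\Omega')$, which is guaranteed by the hypothesis $\sum\varepsilon_i<\textnormal{dist}(x,\partial\Omega)$. This gives
\[
(A_{\varepsilon_1}-I)[w](x)=\int_0^{\varepsilon_1}A_{r_1}[g](x)\,K_{\varepsilon_1}(r_1)\,dr_1 .
\]
Next, insert the definition of $g$ and interchange $A_{r_1}$, which is an integral over $B_{r_1}(x)$, with the $r_2,\dots,r_m$ integrals. This is Fubini's theorem, justified because the integrand is bounded by $\|f\|_{L^\infty(K)}$ on a compact set $K$ containing the union of balls, times the integrable kernels $K_{\varepsilon_i}\ge 0$. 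The result is exactly the right-hand side of \eqref{eq.multiradius}. The left-hand side is $\prod_i(A_{\varepsilon_i}-I)[u](x)$ by definition of $w$.

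\textbf{Main obstacle.} The delicate point is the regularity bookkeeping in Step 1. One needs that $\Delta$ commutes with the averages a.e.\ for functions that are only $C^{1,1}$ at the last stage, and that the induction hypothesis applies to $\Delta u\in C^{2(m-1)-1,1}$. Both follow from viewing the averages as convolutions and using that weak second derivatives of $C^{1,1}$ functions coincide with the a.e.\ classical ones. A secondary check is that $A_{r_1}\cdots A_{r_m}[f]$ is jointly measurable in $(r,y)$, which holds since it is a convolution of $f$ with a kernel depending continuously on $r$ in $L^1$.
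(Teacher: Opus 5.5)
Your proof is correct. It is organized in the reverse order from the paper's.

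The paper proves \eqref{eq.multiradius} the same way it proves \eqref{MVP_1}. It applies \eqref{with_A} first to $u$ itself, with source $v=\Delta u$. It then applies the remaining operators $\prod_{i\ge2}(A_{\eps_i}-I)$ to both sides of that pointwise identity and uses Lemma~\ref{commute} to move them through the $r_1$-integral and past $A_{r_1}$. Only then does it invoke the inductive hypothesis for $v$ at the points of $B_{r_1}(x)$.

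You instead do the following:
\begin{itemize}
\item form $w=\prod_{i\ge2}(A_{\eps_i}-I)[u]$;
\item use that averages commute with $\Delta$ (classically for $m\ge2$, since then $u\in C^3$) to identify $\Delta w=g$ via the inductive hypothesis applied to $\Delta u$;
\item invoke the case $m=1$ exactly once, for $w$ on the shrunken open set $\Omega'$.
\end{itemize}

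Your route costs some bookkeeping. You need that averaging preserves $C^{2m-1,1}$ and that $g\in L^\infty_{loc}(\Omega')$. You also need $\textnormal{dist}(x,\partial\Omega')\ge\textnormal{dist}(x,\partial\Omega)-(\eps_2+\cdots+\eps_m)>\eps_1$, which follows from the $1$-Lipschitz property of the distance function; you assert this without comment, and it is worth stating.

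In exchange, your route produces both $\prod_{i=1}^m(A_{\eps_i}-I)$ and $A_{r_1}A_{r_2}\cdots A_{r_m}$ in exactly the stated order. So you never commute averages of different radii (part 1 of Lemma~\ref{commute}), and your only exchange is one Fubini step moving $A_{r_1}$ inside the $(m-1)$-fold integral.

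The paper's route, conversely, never differentiates an averaged function. It only averages the identity \eqref{with_A}, which holds at every point, and pays for this with the commutation lemma.

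Incidentally, the mutual commutation of the averages, which you list among your ``structural facts,'' is never actually used in your argument.
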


The proof of Proposition \ref{prop.multiradius} is the same as that of
\eqref{MVP_1} in Section \ref{section.m.greater.1}. The argument iterates the
case $m=1$ once per averaging operator and whether the radii are all equal
does not play a role; therefore, we will omit the details.
It is worth noting that, by contrast, \eqref{MVP_2} does not carry over canonically to this
setting. With equal radii, the correction terms
$(A_\varepsilon-I)^l[f](x)$ are unambiguous, whereas when the radii are different, one must decide which $l$ of the $m$ choices
$\varepsilon_1,\dots,\varepsilon_m$ appear in each of the correction
terms $\prod_{i=1}^l(A_{\varepsilon_{\alpha_i}}-I)[f](x)$ for
$1\leq l\leq m-1$.

We also have the spherical-average analogue of  Theorem~\ref{general.main}.

\begin{corollary}\label{cor.spherical.higher}
Let $\Omega$, $f$, and $u$ be as in 
Theorem~\ref{general.main}. Then, using multi-index notation,
\begin{equation}\label{MVP.spherical.higher}
(\mathcal{A}_{\eps}-I)^m[u](x)
=
\int_{[0,\eps]^m} A_{r}[f](x)\cdot
\prod_{i=1}^m\frac{r_i}{n}\,dr
=
\int_{[0,\eps]^m} \mathcal{A}_{r}[f](x)\cdot
\prod_{i=1}^m\mathcal{K}_{\eps}(r_i)\,dr,
\end{equation}
for every
$x\in\Omega$ and every
$0<\eps<\textnormal{dist}(x,\partial\Omega)/m$.
Moreover,
\begin{equation}\label{MVP_2.spherical}
(\mathcal{A}_{\eps}-I)^m[u](x)
=\eps^{2m}\sum_{l=0}^{m-1}\tilde d_l\,
(\mathcal{A}_{\eps}-I)^l[f](x)
+\mathcal{R}_{\eps}(x),
\end{equation}
 for a.e.\ $x\in\Omega$ and
a.e.\ $\eps$ with ${\varepsilon}<\textnormal{dist}(x,\partial\Omega)/m$,
where the constants $\tilde d_l$ are given in terms of the spherical Pizzetti coefficients as follows 
\begin{equation}\label{pizz_d}
\tilde d_l=\sum_{\alpha\in\mathcal{C}(m,m-l)}\;
\prod_{i=1}^{m-l} d_{\alpha_i},
\qquad
d_\alpha=\prod_{j=1}^{\alpha}\frac{1}{(2j)(n+2j-2)}
\end{equation}
and the remainder is
\begin{equation}\label{R_MVP.spherical}
\mathcal{R}_{\eps}(x)=
\int_{[0,\eps]^m}
\bigg(\mathcal{A}_{r}
-\sum_{l=0}^{m-1} b_{\eps}^{m,m-l}(r)\,(\mathcal{A}_\eps-I)^l\bigg)[f](x)\cdot
\prod_{i=1}^{m}\mathcal{K}_\eps(r_i)\,dr,
\end{equation}
with the \emph{same} coefficients $b^{m,k}_{\eps}$ as in \eqref{B_def},
which do not depend on the dimension.
The analogues of statements (1)--(3) in
Theorem \ref{general.main} hold with appropriate changes to the constants.
\end{corollary}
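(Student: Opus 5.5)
We plan to iterate Corollary~\ref{cor.spherical.m=1} exactly as \eqref{MVP_1} is obtained from Theorem~\ref{MVP.pointwise.Laplacian}. Set $u_0=u$ and $u_k=(\mathcal{A}_\eps-I)^k[u]$. Since spherical averaging commutes with $\Delta$ and preserves $C^{j,1}$ regularity on the shrunken domain $\{x:\textnormal{dist}(x,\partial\Omega)>k\eps\}$, each $u_k$ lies in $C^{2m-1,1}$ there. Applying $\Delta$ under the integral, we get $\Delta^{m-k}u_k=(\mathcal{A}_\eps-I)^k[f]$ a.e.

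It is cleaner to work with the commuting operators $\mathcal{A}_\eps-I$ and $A_{r}$ instead of a recursion on $u_k$. Write $(\mathcal{A}_\eps-I)^m[u]=(\mathcal{A}_\eps-I)[w]$ with $w=(\mathcal{A}_\eps-I)^{m-1}[u]$. Corollary~\ref{cor.spherical.m=1} applies to any $C^{1,1}$ function $w$ with $\Delta w\in L^\infty_{loc}$. Here $\Delta w=(\mathcal{A}_\eps-I)^{m-1}[\Delta u]$, so
\[
(\mathcal{A}_\eps-I)^m[u](x)=\int_0^\eps A_{r_1}\big[(\mathcal{A}_\eps-I)^{m-1}[\Delta u]\big](x)\,\frac{r_1}{n}\,dr_1 .
\]
Since $A_{r_1}$ commutes with $\mathcal{A}_\eps$ (both are convolutions with radial measures), this equals $(\mathcal{A}_\eps-I)^{m-1}$ applied to $g_1:=\int_0^\eps A_{r_1}[\Delta u]\,\frac{r_1}{n}\,dr_1$. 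We then repeat: $g_1$ is $C^{2m-3,1}$, and $\Delta^{m-1}g_1=\int A_{r_1}[f]\frac{r_1}{n}dr_1$. After $m$ steps, Fubini gives the first identity in \eqref{MVP.spherical}, holding for every $x$, because the right-hand side only involves integrals of $f$. The second form follows factor by factor, since $\int_0^\eps A_r[h](x)\frac{r}{n}dr=\int_0^\eps\mathcal{A}_r[h](x)\mathcal{K}_\eps(r)dr$ for bounded $h$. This identity is the one already used in Corollary~\ref{cor.spherical.m=1}: write $A_r[h]=\frac{n}{r^n}\int_0^r s^{n-1}\mathcal{A}_s[h]ds$ and swap the order of integration, which yields $\mathcal{K}_\eps(s)=\frac{s^{n-1}}{1}\int_s^\eps r^{1-n}dr$. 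Applying it in each variable $r_i$ separately, by Fubini and commutativity, gives $\int \mathcal{A}_{r}[f]\prod\mathcal{K}_\eps(r_i)dr$.

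For \eqref{MVP_2.spherical}, we add and subtract $\sum_l b^{m,m-l}_\eps(r)(\mathcal{A}_\eps-I)^l[f](x)$ inside the second integral in \eqref{MVP.spherical}. It then suffices to prove the spherical counterpart of the moment identity
\[
\int_{[0,\eps]^m} b^{m,m-l}_\eps(r)\prod_{i=1}^m\mathcal{K}_\eps(r_i)\,dr=\eps^{2m}\tilde d_l .
\]
The plan is first to prove the one-variable moment formula $\int_0^\eps (r/\eps)^{2j-2}\mathcal{K}_\eps(r)\,dr=\eps^2/\big((2j)(n+2j-2)\big)$, which is a direct computation: for $n\ge3$ it equals $\frac{\eps^2}{n-2}\big(\frac1{2j}-\frac1{2j+n-2}\big)$, and for $n=2$ it is $\eps^2/(2j)^2$. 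Next, we expand $b^{m,k}_\eps$ via \eqref{B_def}. Each composition gives a product over blocks, and within a block the $j$-th variable carries $(r/\eps)^{2j-2}$. By Fubini the integral factorizes into $\prod_i\prod_{j\le\alpha_i}\frac{\eps^2}{(2j)(n+2j-2)}=\eps^{2m}\prod_i d_{\alpha_i}$, and summing over compositions yields $\tilde d_l$. This mirrors Lemmas~\ref{Kint} and \ref{lemma.B.properties} with $n$ replaced by $n-2$, in line with Remark~\ref{remark.K.mathcal.K.dimension.intro}. The exceptional set in ``a.e.\ $\eps$'' comes from evaluating $(\mathcal{A}_\eps-I)^l[f](x)$ pointwise for merely bounded $f$; we restrict to Lebesgue points of $f$ and to radii where the spherical means are defined.

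The analogues of (1)--(3) follow as in Theorem~\ref{general.main}. The integrand of $\mathcal{R}_\eps$ is a combination of terms of the form $\mathcal{A}_{r}[f]-\mathcal{A}_{\rho}[f]$ evaluated at $x$, each controlled by $\omega(m\eps)$, and $\mathcal{K}_\eps\ge0$. This leads to a constant of the form $C_m\,(\tfrac{1}{2n})^m$. The main obstacle, in my view, is the regularity bookkeeping in the iteration, namely justifying $\Delta$ under the integrals of $A_{r_1}$ and $\mathcal{A}_\eps$ when $u$ is only $C^{2m-1,1}$ and $\Delta^m u$ exists only a.e. We handle this by noting that convolution with a finite radial measure commutes with weak derivatives, and that $W^{2,\infty}_{loc}=C^{1,1}$.
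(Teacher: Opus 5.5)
Your proposal is correct, but for the integral identity \eqref{MVP.spherical.higher} it takes a different route from the paper's proof.

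\textbf{How the paper does it.} The paper starts from the multi-radius ball formula of Proposition~\ref{prop.multiradius}. It applies $L_{\eps_i}=\frac{\eps_i}{n}\partial_{\eps_i}+1$ once in each independent radius. On the left-hand side this turns $(A_{\eps_i}-I)$ into $(\mathcal{A}_{\eps_i}-I)$. On the right-hand side, Leibniz's rule turns $K_{\eps_i}(r_i)$ into $r_i/n$. Only then are the radii set equal.

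\textbf{How you do it.} You rerun the induction of Step~1 of Theorem~\ref{general.main}, with Corollary~\ref{cor.spherical.m=1} as the one-step identity. You peel off one factor $(\mathcal{A}_\eps-I)$ at a time and move each $A_{r_i}$ past $\mathcal{A}_\eps$ by commutativity of radial convolutions.

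\textbf{What each route buys.} Your version is more self-contained. It needs only the $m=1$ spherical formula, commutativity, and the fact that convolution with a finite radial measure commutes with weak derivatives. It avoids both the multi-radius proposition (whose proof the paper omits) and differentiation with respect to the radii. The paper's version is shorter given the machinery already in place, and it yields a multi-radius spherical identity along the way. Your iteration gives that too if you use a different radius at each step.

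\textbf{The second half.} For \eqref{MVP_2.spherical} and the estimates you follow the paper's Steps~2--3. The only difference is that you compute the $n=2$ moments directly, via $\int_0^1 t^{2j-1}\log(1/t)\,dt=(2j)^{-2}$. The paper instead passes to the limit $n\to2^+$ by monotone convergence, as in Remark~\ref{remark.n.minus.2}.

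\textbf{A harmless slip.} In your first paragraph, $\Delta^{m-k}u_k$ equals $(\mathcal{A}_\eps-I)^k[\Delta^{m-k}u]$, not $(\mathcal{A}_\eps-I)^k[f]$. The correct statement is $\Delta^{m}u_k=(\mathcal{A}_\eps-I)^k[f]$. Since you abandon that recursion, the argument is unaffected.
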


\begin{remark}
The restriction to almost every $\eps$ in \eqref{MVP_2.spherical} stems from the fact that $f\in L^\infty_{loc}(\Omega)$ is defined only up to a null set, and the
correction terms $(\mathcal{A}_\eps-I)^l[f](x)$ sample $f$ on spheres, which are null sets in~$\R^n$. 
By Fubini's theorem in polar coordinates, for each fixed $x$ the spherical mean $\mathcal{A}_\eps[f](x)$ is
well defined only for a.e.\ radius $\eps\in(0,\textnormal{dist}(x,\partial\Omega))$, and the same applies to the iterated means.
By contrast, the integral form \eqref{MVP.spherical.higher} only involves radial integrals of means of $f$, and therefore holds for every
$x\in\Omega$ and every admissible $\eps$. Whenever
$f\in C(\Omega)$, the spherical means of $f$ are defined for all radii and
\eqref{MVP_2.spherical} holds for every $x\in\Omega$ and every
$\eps<\textnormal{dist}(x,\partial\Omega)/m$.
\end{remark}

\subsection{Quantitative versus asymptotic mean-value properties}

When $f\in C(\Omega)$, 
Theorem \ref{general.main} yields the asymptotic mean-value property
\begin{equation}\label{AMVP.f}
(A_\eps-I)^m[u](x)
=
 \frac{f(x)}{\big(2(n+2)\big)^m}\,\varepsilon^{2m}+o(\eps^{2m})
\qquad\textrm{as}\ \eps\to0,\ \textrm{locally uniformly in}\ \Omega,
\end{equation}
since the remainder in \eqref{MVP_2} is
$o(\eps^{2m})$ locally uniformly by statement (1), and so are the correction terms with $l\geq1$ by Lemma \ref{lemma.osc} below and the uniform continuity of $f$
on compact subsets.
Let us recall from 
\cite{CLR} (see also Cheng \cite{CM}) that, already in the homogeneous case 
$f\equiv0$, the little-$o$ in asymptotic mean-value properties involving iterated 
means must be locally uniform in the sense of Definition~\ref{loc_unif}, as there are counterexamples otherwise. For instance, it is shown in \cite{CLR} that there are 
functions satisfying $(A_{r}-I)^2[u](x)=o(r^{4})$ pointwise in~$\Omega$ as $r\to0$ 
that are not biharmonic.

The following proposition shows that the continuity of $f$ is not only 
sufficient but also necessary for \eqref{AMVP.f} to hold.
In fact, we show that the locally uniform 
asymptotic mean-value property forces both $u$ and $f$ to be continuous 
(up to a null set). Consequently, for discontinuous 
$f\in L^\infty_{loc}(\Omega)$, no asymptotic formulation with locally uniform 
remainder is possible, and exact formulas such as \eqref{MVP_1}--\eqref{MVP_2} 
become the natural formulation of the mean-value property.

\begin{proposition}[Necessity of continuity in asymptotic mean-value 
properties]\label{prop.necessity.continuity}
Let $\Omega\subset\R^n$, $n\geq2$, be open, $m$ a positive integer, $f:\Omega\to\R,$
$f\in L^\infty_{loc}(\Omega)$. 
Assume that there exists a real-valued function $u\in L^1_{loc}(\Omega)$ satisfying \eqref{AMVP.f}. 
Then, both $u$ and $f$ agree a.e.\ in $\Omega$ with continuous~functions.
\end{proposition}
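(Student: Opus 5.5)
The plan is to show that, for each small fixed $\eps$, both sides of \eqref{AMVP.f} can be rearranged into "a continuous function of $x$ plus a small error". Then $u$, and afterwards $f$, is an essentially uniform limit of continuous functions on compact sets, hence agrees a.e.\ with a continuous function. The basic ingredient is that $A_\eps[v]$ is continuous in $x$ for every $v\in L^1_{loc}(\Omega)$. Indeed,
\[
|A_\eps[v](x)-A_\eps[v](x')|\le \frac{1}{|B_\eps|}\int_{B_\eps(x)\triangle B_\eps(x')}|v|\,dy\to0
\quad\textrm{as}\ x'\to x,
\]
by absolute continuity of the integral. Iterating, $A_\eps^j[v]$ is continuous on $\{x:\ \textnormal{dist}(x,\partial\Omega)>j\eps\}$ for every $j\ge1$.

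\emph{Step 1 (continuity of $u$).} Fix a closed ball $K=\overline{B}\subset\Omega$ and restrict to $\eps<\textnormal{dist}(K,\partial\Omega)/m$. Expanding binomially,
\[
(A_\eps-I)^m[u]=(-1)^m u+\sum_{j=1}^m\binom{m}{j}(-1)^{m-j}A_\eps^j[u].
\]
Define the continuous function $g_\eps=(-1)^{m+1}\sum_{j=1}^m\binom{m}{j}(-1)^{m-j}A_\eps^j[u]$. Then \eqref{AMVP.f} gives
\[
\operatorname*{ess\,sup}_{x\in K}|u(x)-g_\eps(x)|\le \frac{\|f\|_{L^\infty(K)}}{(2(n+2))^m}\,\eps^{2m}+o(\eps^{2m})\to0 .
\]
Hence, for $\eps,\eps'$ small, $\operatorname*{ess\,sup}_K|g_\eps-g_{\eps'}|$ is small. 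Since $g_\eps-g_{\eps'}$ is continuous and every nonempty open subset of the ball has positive measure, this essential supremum equals $\sup_{B}|g_\eps-g_{\eps'}|$. So $(g_\eps)$ is uniformly Cauchy on $B$ and converges uniformly to a continuous $g$ with $u=g$ a.e.\ on $B$. Covering $\Omega$ by countably many such balls, the local limits agree on overlaps, because each equals $u$ a.e.\ there and both are continuous. They therefore patch to a continuous $\tilde u$ with $u=\tilde u$ a.e.\ in $\Omega$.

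\emph{Step 2 (continuity of $f$).} Replacing $u$ by $\tilde u$ changes no average, and changes $(A_\eps-I)^m[u]$ only on a null set, so \eqref{AMVP.f} still holds, with ess sup, for $\tilde u$. Now $h_\eps=(2(n+2))^m\eps^{-2m}(A_\eps-I)^m[\tilde u]$ is continuous on $K$ for each small $\eps$. Rewriting \eqref{AMVP.f} as
\[
\operatorname*{ess\,sup}_K|f-h_\eps|=o(1)\quad\textrm{as}\ \eps\to0,
\]
the same Cauchy argument as in Step 1 shows that $h_\eps$ converges uniformly on $B$ to a continuous function equal to $f$ a.e. Patching again as in Step 1 yields the conclusion for $f$.

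\emph{Main obstacle.} The only delicate point is the bookkeeping with null sets. The identity \eqref{AMVP.f} is only known in the essential-supremum sense of Definition~\ref{loc_unif}, and the term $(-1)^m u$ is not continuous a priori. This is handled by isolating that term in Step 1 and by using that, for continuous functions on open balls, the essential supremum coincides with the supremum. The local uniformity of the $o(\eps^{2m})$ term is exactly what makes the limits uniform; a merely pointwise remainder would not suffice, consistent with the counterexamples recalled from \cite{CLR}.
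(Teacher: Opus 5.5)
Your proposal is correct and follows the paper's proof essentially step for step. You isolate $u$ from the binomial expansion of $(A_\eps-I)^m[u]$ against the continuous combination $\sum_{j=1}^m\binom{m}{j}(-1)^{j-1}A_\eps^j[u]$, run a uniform Cauchy argument in essential supremum on closed balls, patch over a countable cover, and then repeat the argument for $f$ using $(2(n+2))^m\eps^{-2m}(A_\eps-I)^m[\tilde u]$ with $\tilde u$ the continuous representative. Your explicit remarks that the essential supremum of a continuous function on a ball dominates its supremum there, and that passing from $u$ to $\tilde u$ changes $(A_\eps-I)^m[u]$ only on a null set, make precise two points the paper leaves implicit.
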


\begin{proof}
Let us write \eqref{AMVP.f} in the form
\begin{equation}\label{AMVP.f.remainder}
(A_\eps-I)^m[u](x)= \frac{f(x)}{\big(2(n+2)\big)^m}\,\varepsilon^{2m}+\rho_\eps(x),
\end{equation}
where, by Definition \ref{loc_unif}, for every compact $K\subset\Omega$ and 
every $\eta>0$ there exists $\eps_0>0$ such that
\begin{equation}\label{rho.smallness}
\operatorname*{ess\,sup}_{x\in K}|\rho_\eps(x)|\leq\eta\,\eps^{2m}
\qquad\textrm{for all}\ 0<\eps<\eps_0.
\end{equation}

Let us show first that $u$ agrees a.e.\ with a continuous function. 
Expanding the binomial in \eqref{AMVP.f.remainder} and isolating the zero-order term, we can write, for 
a.e.\ $x\in\Omega$ and $\eps<\textnormal{dist}(x,\partial\Omega)/m$,
\[
\frac{f(x)}{\big(2(n+2)\big)^m}\,\varepsilon^{2m}+\rho_\eps(x)
=
(A_\eps-I)^m[u](x)
=(-1)^m u(x)+\sum_{j=1}^{m}\binom{m}{j}(-1)^{m-j}A_\eps^{j}[u](x).
\]
Denoting 
\[
h_\eps=\sum_{j=1}^{m}\binom{m}{j}(-1)^{j-1}A_\eps^{j}[u],
\]
and rearranging terms, we get
\begin{equation}\label{u.as.uniform.limit}
u(x)
=
h_\eps(x)
+(-1)^m\left(\frac{f(x)}{\big(2(n+2)\big)^m}\,\eps^{2m}+\rho_\eps(x)\right).
\end{equation}
Let $K\subset\Omega$ be a closed ball and 
$\eps<\textnormal{dist}(K,\partial\Omega)/m$. 
By the Lebesgue lemma,
 $x \mapsto A_\eps^ju$ is continuous in ${\{x\in\Omega:\ \textnormal{dist}(x,\partial\Omega)>m\eps\}}$, and therefore $h_\eps$ is continuous on $K$. 
Then, from \eqref{u.as.uniform.limit} and \eqref{rho.smallness} with $\eta=1$, we get 
\[
\operatorname*{ess\,sup}_{K}|u-h_\eps|
\leq
\left(\frac{\|f\|_{L^\infty(K)}}{\big(2(n+2)\big)^m}+1\right)\eps^{2m}
\]
for all $\eps$ small enough. Choosing a sequence $\{\eps_j\}$ that makes
${\operatorname*{ess\,sup}_{K}|u-h_{\eps_j}|
\leq1/j}$, we obtain
\[
\operatorname*{ess\,sup}_{K}|h_{\eps_k}-h_{\eps_l}|
\leq
\operatorname*{ess\,sup}_{K}|u-h_{\eps_k}|
+\operatorname*{ess\,sup}_{K}|u-h_{\eps_l}|
\leq\frac{1}{k}+\frac{1}{l}.
\]
Since $\{h_{\eps_k}\}$ is uniformly Cauchy on $K$, it converges uniformly on $K$ to a continuous function that 
equals $u$ a.e.\ in $K$. 
Because
$K\subset \Omega$ was an arbitrary closed ball, we can cover $\Omega$ with countably many open balls whose closures are contained in $\Omega$, and the argument above provides continuous representatives of $u$ on each ball.
 Any two of them agree on 
the overlaps, and hence they form a single continuous function equal to 
$u$ a.e.\ in $\Omega$, proving the claim.

Let us now show that $f$ agrees a.e.\ with a continuous function.
Replacing $u$ with its continuous representative, all the terms of the binomial expansion of 
$(A_\eps-I)^m[u]$ are continuous 
in ${\{x\in\Omega:\ \textnormal{dist}(x,\partial\Omega)>m\eps\}}$, and so is the function
\[
g_\eps
=\frac{\big(2(n+2)\big)^m}{\eps^{2m}}\,(A_\eps-I)^m[u].
\]
Let $K\subset\Omega$ be a closed ball and let $\eta'>0$. Applying 
\eqref{rho.smallness} with $\eta=\eta'\,(2(n+2))^{-m}$, and choosing any 
$\eps<\min\big\{\eps_0,\textnormal{dist}(K,\partial\Omega)/m\big\}$, we 
obtain from \eqref{AMVP.f.remainder} that
\[
\operatorname*{ess\,sup}_{K}|f-g_{\eps}|
=\frac{\big(2(n+2)\big)^m}{\eps^{2m}}
\operatorname*{ess\,sup}_{K}|\rho_{\eps}|
\leq\eta'.
\]
Arguing as before, we get that $f$ agrees a.e.\ in $\Omega$ with a continuous function, as desired.
\end{proof}

\subsection{Regularity and converse to the mean-value formula}

Let us show that a locally integrable function satisfying the mean-value formulas in \eqref{MVP_1} or \eqref{MVP_2} shares the same regularity as solutions to the higher-order Poisson equation $\Delta^mu=f$. This also provides a strong converse to the mean-value formulas, in the sense that merely locally integrable functions satisfying either of these are solutions of $\Delta^mu=f$ in an appropriate sense according to the regularity of $f$.

Let us compare these results to their counterparts in the homogeneous case $f\equiv0$ (see \cite{CLR}), where a strong converse holds, in the sense that $L^1_{loc}$ functions satisfying a mean-value property are smooth classical solutions of the polyharmonic equation $\Delta^m u=0$. 
The situation is different for nontrivial right-hand sides $f$. There is a rich regularity theory for the higher-order Poisson equation $\Delta^mu=f$ (see, for instance, \cite{Gazzola.et.al.2010}), where the different levels of regularity of $f$ translate into different levels of regularity of the solution $u$, which in turn dictates the appropriate notion~of~solution.

One should expect a similar effect for \eqref{MVP_1}--\eqref{MVP_2}, where the regularity that can be reached through the mean-value formula should depend on the regularity of $f$.
Likewise, strong converse results, where locally integrable functions satisfying the mean-value property are found to be solutions of the higher-order Poisson equation, are more subtle, in the sense that the notion of solution can also depend on the regularity of $f$.

\begin{proposition}[Regularity and converse to the mean-value property]\label{prop.regularity}
Let $\Omega\subset\R^n$, $n\geq2$, be open, $m$ a positive integer, and $f :\Omega\to\R$ with $f\in L^\infty_{loc}(\Omega)$. Assume there exists a real-valued $w \in C^{2m-1,1}(\Omega)$ with $\Delta^m w = f$ 
a.e.\ in $\Omega$. Let $u \in L^1_{\mathrm{loc}}(\Omega)$~satisfy
\begin{equation}\label{eq.MVPhyp}
(A_\varepsilon - I)^m[u](x)
= \int_{[0,\varepsilon]^m} A_{r}[f](x)\,
\prod_{i=1}^m K_\varepsilon(r_i)\, dr
\quad
\textrm{for a.e. } x \in \Omega \textrm{ and every } 
\varepsilon < \tfrac{\operatorname{dist}(x,\partial\Omega)}{m}.
\end{equation}
Then $u \in C^{2m-1,1}(\Omega)$ and $\Delta^m u = f$ a.e.\ in $\Omega$. 
Moreover, depending on $f$, the function $w$ may be more regular, in which case $u$ enjoys the same level of regularity as $w$.
\end{proposition}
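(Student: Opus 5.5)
The plan is to reduce to the homogeneous case through linearity, using the auxiliary solution $w$. Since $w\in C^{2m-1,1}(\Omega)$ and $\Delta^m w=f$ a.e., formula \eqref{MVP_1} of Theorem~\ref{general.main} applies to $w$. It gives, for every $x\in\Omega$ and every $\varepsilon<\operatorname{dist}(x,\partial\Omega)/m$,
\[
(A_\varepsilon-I)^m[w](x)=\int_{[0,\varepsilon]^m}A_r[f](x)\prod_{i=1}^m K_\varepsilon(r_i)\,dr .
\]
Set $v=u-w\in L^1_{loc}(\Omega)$. The operator $(A_\varepsilon-I)^m$ is linear, being a finite linear combination of the iterated averages $A_\varepsilon^j$. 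Each $A^j_\varepsilon$ is well defined on $L^1_{loc}$ functions at points with $j\varepsilon<\operatorname{dist}(x,\partial\Omega)$. Subtracting the two identities therefore gives $(A_\varepsilon-I)^m[v](x)=0$ for a.e.\ $x\in\Omega$ and every $\varepsilon<\operatorname{dist}(x,\partial\Omega)/m$.

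This is exactly statement (2) of Theorem~\ref{thm.MVP.polyharmonic.intro}. By that theorem (together with Theorem~\ref{thm.regularity}), $v$ agrees a.e.\ with a function in $C^\infty(\Omega)$ satisfying $\Delta^m v=0$ classically. Replace $u$ by its representative $w+v$; equality a.e.\ is all that can be asserted for an $L^1_{loc}$ function. Then $u=w+v$ with $v\in C^\infty(\Omega)\subset C^{2m-1,1}_{loc}(\Omega)$, so $u\in C^{2m-1,1}(\Omega)$. Moreover $\Delta^m u=\Delta^m w+\Delta^m v=f+0=f$ a.e.\ in $\Omega$. Here $\Delta^m w$ is understood a.e., as the $2m$-th derivatives of $w$ exist a.e.\ by Rademacher's theorem applied to the $(2m-1)$-th derivatives. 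For the final claim, suppose $w$ has more regularity, for instance $w\in C^{2m,\alpha}$, $W^{2m,p}_{loc}$, or $C^\infty$. Then $u=w+v$ inherits it, since $v$ is smooth. So $u$ is a solution in whatever sense is appropriate for $w$.

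The main obstacle is making sure the hypotheses of Theorem~\ref{thm.MVP.polyharmonic.intro}(2) match what we obtain. The identity for $v$ holds only for a.e.\ $x$, with an exceptional set independent of $\varepsilon$. This is precisely the form required there, but I would check that the null set coming from \eqref{eq.MVPhyp} does not depend on $\varepsilon$. If it did, I would first use continuity in $\varepsilon$ of both sides to pass to all $\varepsilon$: the left side is continuous in $\varepsilon$ for fixed $x$ by absolute continuity of the integral, and the right side is manifestly continuous. Equality for a dense countable set of radii would then propagate to all radii outside a single null set.

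A secondary point is the meaning of $C^{2m-1,1}(\Omega)$. If it means local $C^{2m-1,1}$ regularity, the smooth $v$ poses no issue. If global bounds are intended, I would interpret the conclusion locally, consistent with the paper's usage for $w$.
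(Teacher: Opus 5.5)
Your proposal is correct and follows the paper's proof essentially verbatim: apply \eqref{MVP_1} to $w$, subtract to get $(A_\varepsilon-I)^m[u-w](x)=0$ for a.e.\ $x$ and every admissible $\varepsilon$, then invoke Theorems~\ref{thm.MVP.polyharmonic.intro} and~\ref{thm.regularity} to conclude that $u-w$ is smooth and $m$-harmonic, so $u$ is as regular as $w$. Your extra points are harmless refinements: choosing the continuous representative $w+v$, and checking that the exceptional null set does not depend on $\varepsilon$ (the hypothesis \eqref{eq.MVPhyp} as stated already guarantees this).
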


\begin{remark}
The hypothesis \eqref{eq.MVPhyp} can equivalently be replaced by
\eqref{MVP_2}, since the right-hand sides of \eqref{MVP_1} and
\eqref{MVP_2} coincide for a.e.\ $x\in\Omega$; see
\eqref{B_integral.tilde.c's} below.
\end{remark}

\begin{proof}
By Theorem \ref{general.main}, the function $w$ satisfies 
\eqref{eq.MVPhyp} for every $x \in \Omega$. Hence, by linearity of $(A_\varepsilon - I)^m$, the difference
$v = u - w \in L^1_{\mathrm{loc}}(\Omega)$ satisfies
\[
(A_\varepsilon - I)^m[v](x) = 0
\quad \textrm{for a.e. } x \in \Omega \textrm{ and every } 
\varepsilon < \tfrac{\operatorname{dist}(x,\partial\Omega)}{m}.
\]
By Theorems \ref{thm.MVP.polyharmonic.intro} and \ref{thm.regularity} (see also \cite{CLR}), $v \in C^\infty(\Omega)$ and $\Delta^mv=0$, which implies
$u = w + v \in C^{2m-1,1}(\Omega)$ and
$\Delta^m u = f$ a.e.\ in $\Omega$, as desired. Observe that the relation $u = w + v$ with $v$ smooth means $u$ is as regular as $w$. 
\end{proof}

\begin{remark}
From the proof, it follows that
any two functions in $L^1_{\mathrm{loc}}(\Omega)$ satisfying 
\eqref{eq.MVPhyp} differ by an $m$-harmonic function.
\end{remark}

Let us finish this section discussing the role of the exact remainder in \eqref{MVP_1} or \eqref{MVP_2} in the proof of Proposition \ref{prop.regularity}. 
Whenever $f$ is continuous,
the same result can be obtained from an asymptotic mean-value property. More precisely, given $f\in C(\Omega)$, assume $u \in L^1_{\mathrm{loc}}(\Omega)$ satisfies~\eqref{AMVP.f}. Note that by Proposition \ref{prop.necessity.continuity}, $u \in C(\Omega)$. Let $w \in C^{2m-1,1}(\Omega)$ with $\Delta^m w = f$ a.e.\ in $\Omega$ as in Proposition~\ref{prop.regularity}, which,
by Theorem~\ref{general.main}, also satisfies \eqref{AMVP.f}.
Hence, the difference $v = u - w \in C(\Omega)$~satisfies
\[
(A_\eps-I)^m[v](x)
=
o(\eps^{2m})
\qquad\textrm{as}\ \eps\to0,\ \textrm{locally uniformly in}\ \Omega.
\]
Therefore, by Theorem \ref{thm.regularity}, $v \in C^\infty(\Omega)$, and we conclude that
$u$ is as regular as $w$.

The continuity of $f$ is a necessary and sufficient condition for the validity of the asymptotic expansion \eqref{AMVP.f}, as shown by Theorem \ref{general.main} and Proposition \ref{prop.necessity.continuity}. Thus, if one 
wishes to allow discontinuous forcing terms $f\in L^\infty_{loc}(\Omega)$, 
asymptotic mean-value properties are unavailable, and the exact formulas 
\eqref{MVP_1}--\eqref{MVP_2}, with the remainder quantified in terms of the 
oscillation of $f$, are the appropriate formulation.

We proceed with the proofs of Theorems \ref{MVP.pointwise.Laplacian} and \ref{general.main} and  Corollaries \ref{cor.spherical.m=1} and \ref{cor.spherical.higher}.

\section{Proof of Theorem \ref{MVP.pointwise.Laplacian} and Corollary \ref{cor.spherical.m=1}}
\label{section.m.is.1}

Before proceeding with the proof of Theorem \ref{MVP.pointwise.Laplacian}, we prove two auxiliary lemmas. The first one describes an important auxiliary function.

\begin{lemma}\label{lemma.w.delta.epsilon}
Let $0< \delta<\varepsilon$ and $\chi_r(x)=|B_r(0)|^{-1}\cdot\chi_{B_r(0)}(x)$. Then, the 
 equation 
\begin{equation}\label{problem.w}
 \Delta w = \chi_\varepsilon - \chi_\delta\quad\textrm{in}\ \mathbb{R}^n
\end{equation}
admits 
a global $C^{1,1}$ solution, compactly supported in $B_\varepsilon(0)$, given by
\begin{equation}\label{problem.w.sol.n}
w_{\delta,\varepsilon}(x)
=
\left\{
\begin{aligned}
 &\frac{1}{2\varepsilon^n|B_1(0)|}\left[\left(1-\frac{\varepsilon^n}{\delta^n}\right)\frac{|x|^2}{n}-\frac{\varepsilon^{2}}{n-2}\left(1-\frac{\varepsilon^{n-2}}{\delta^{n-2}}\right)\right] & &\textrm{if}\ 0\leq|x|<\delta&\\
 &\frac{1}{2\varepsilon^n|B_1(0)|}\left[\frac{|x|^2}{n}+\frac{2\varepsilon^n}{n(n-2)}|x|^{2-n}-\frac{\varepsilon^{2}}{n-2}\right] & &\textrm{if}\ \delta\leq|x|<\varepsilon,\\
 &0 & &\textrm{otherwise,}\\
\end{aligned}
\right.
\end{equation}
 when $n\geq3$,
and by
\begin{equation}\label{problem.w.sol.n=2}
w_{\delta,\varepsilon}(x)
=
\left\{
\begin{aligned}
&\frac{1}{2\varepsilon^2\pi}\left[\left(1-\frac{\varepsilon^2}{\delta^2}\right)
\frac{|x|^2}{2}+\varepsilon^2(\log{\varepsilon}-\log{\delta}) \right] 
 & &\textrm{if}\ 0\leq|x|<\delta\\
 &\frac{1}{2\varepsilon^2\pi}\left[
 \frac{|x|^2-\varepsilon^2}{2}+\varepsilon^2(\log{\varepsilon}-\log{|x|})
 \right] & &\textrm{if}\ \delta\leq|x|<\varepsilon,\\
 &0 & &\textrm{otherwise,}\\
\end{aligned}
\right.
\end{equation}
when $n=2$.

%

\end{lemma}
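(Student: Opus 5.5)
The plan is a direct verification: the candidate $w_{\delta,\varepsilon}$ is radial and piecewise smooth, so it suffices to check three things. First, $\Delta w=\chi_\varepsilon-\chi_\delta$ holds classically on each of the open regions $\{|x|<\delta\}$, $\{\delta<|x|<\varepsilon\}$ and $\{|x|>\varepsilon\}$. Second, $w$ and its radial derivative $w'(r)$ are continuous across the spheres $|x|=\delta$ and $|x|=\varepsilon$. Third, once those hold, $w$ is $C^{1,1}$ globally and satisfies the equation in the weak (and a.e.) sense. Compact support in $\overline{B_\varepsilon(0)}$ is built into the formula.

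For the first check I use the radial Laplacian $\Delta w=w''+\frac{n-1}{r}w'$, noting $\Delta|x|^2=2n$ and $\Delta|x|^{2-n}=0$ for $n\ge3$ (and $\Delta\log|x|=0$ for $n=2$). Write $c=\big(2\varepsilon^n|B_1|\big)^{-1}$.
\begin{itemize}
\item In the annulus $\delta<|x|<\varepsilon$, this gives $\Delta w=c\cdot 2=1/(\varepsilon^n|B_1|)=|B_\varepsilon|^{-1}$, which matches $\chi_\varepsilon-\chi_\delta$ there.
\item In the inner ball, $\Delta w=c\,(1-\varepsilon^n/\delta^n)\cdot 2=|B_\varepsilon|^{-1}-|B_\delta|^{-1}$, again as required.
\item Outside $B_\varepsilon$, $w\equiv0$.
\end{itemize}
The $n=2$ case is identical, with $|x|^2/2$ and $\log$ in place of $|x|^2/n$ and $|x|^{2-n}$.

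The second check is the continuity of $w$ and $w'$ at $r=\varepsilon$ and $r=\delta$.
\begin{itemize}
\item At $r=\varepsilon$, the annular bracket equals $\frac{\varepsilon^2}{n}+\frac{2\varepsilon^2}{n(n-2)}-\frac{\varepsilon^2}{n-2}=0$. Its derivative is $\frac{2r}{n}-\frac{2\varepsilon^n}{n}r^{1-n}$, which vanishes at $r=\varepsilon$.
\item At $r=\delta$, the annular derivative $\frac{2\delta}{n}(1-\varepsilon^n\delta^{-n})$ matches the inner derivative $\frac{2\delta}{n}(1-\varepsilon^n/\delta^n)$.
\item The values at $r=\delta$ agree: both equal $\frac{\delta^2}{n}-\frac{\varepsilon^n\delta^{2-n}}{n}-\frac{\varepsilon^2}{n-2}+\frac{\varepsilon^n\delta^{2-n}}{n-2}$, after regrouping $\frac{2}{n(n-2)}=\frac1{n-2}-\frac1n$.
\end{itemize}
These are routine algebraic identities, and the $n=2$ analogues are checked in the same way.

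Finally, $\nabla w=w'(r)\,x/r$ is continuous and piecewise smooth. Its derivatives are bounded on each piece, including near the origin, where $w$ is a quadratic polynomial. Hence $\nabla w$ is globally Lipschitz, i.e.\ $w\in C^{1,1}(\R^n)$. For the equation itself, I integrate by parts against a test function $\varphi$ on each region. The boundary terms on $|x|=\delta,\varepsilon$ cancel because $w$ and $\partial_r w$ are continuous, so $\int w\Delta\varphi=\int(\chi_\varepsilon-\chi_\delta)\varphi$. The only real care needed, and the main potential pitfall, is this matching of the constants at $r=\delta$; the other steps are mechanical.
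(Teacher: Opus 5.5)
Your proposal is correct and takes the same approach as the paper. The paper writes $w_{\delta,\varepsilon}=\phi(|x|)$ on $B_\delta$ and $\psi(|x|)$ on the annulus, checks the radial equations on each piece, and checks the matching conditions $\phi(\delta)=\psi(\delta)$, $\phi'(\delta)=\psi'(\delta)$, $\psi(\varepsilon)=\psi'(\varepsilon)=0$; your computations match this exactly. Your extra arguments (global Lipschitz continuity of $\nabla w$, and cancellation of the interface terms in the weak formulation) fill in what the paper leaves as ``easy to check.''
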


\begin{proof}
We can write \eqref{problem.w.sol.n} and \eqref{problem.w.sol.n=2} as 
\[
w_{\delta,\varepsilon}(x)
=
\left\{
\begin{aligned}
 &\phi(|x|) & &\textrm{if}\ 0\leq|x|<\delta\\
 &\psi(|x|) & &\textrm{if}\ \delta\leq|x|\leq\varepsilon\\
 &0 & &\textrm{otherwise.}\\
\end{aligned}
\right.
\]
Then, it is easy to check that \eqref{problem.w.sol.n} and \eqref{problem.w.sol.n=2} satisfy
\[
\left\{
\begin{aligned}
 &\frac{1}{r^{n-1}}\frac{d}{dr}\left(r^{n-1}\phi'(r)\right)=\frac{1}{\varepsilon^n|B_1(0)|}\left(1-\frac{\varepsilon^n}{\delta^n}\right)&\textrm{if}\ &0\leq r<\delta\\
 &\frac{1}{r^{n-1}}\frac{d}{dr}\left(r^{n-1}\psi'(r)\right)=\frac{1}{\varepsilon^n|B_1(0)|}\
 &\textrm{if}\ &\delta<r\leq\varepsilon\\
 &\phi(\delta)=\psi(\delta)\\
 &\phi'(\delta)=\psi'(\delta)\\
 &\psi(\varepsilon)=\psi'(\varepsilon)=0,\\
 \end{aligned}
 \right.
\]
for the corresponding $n$. Therefore $w_{\delta,\varepsilon}$ is a $C^{1,1}$ solution of \eqref{problem.w}, compactly supported in $B_\varepsilon(0)$.
\end{proof}

Next, we compute the limit as $\delta \to 0$ of the convolution of a locally integrable function with the function $w_{\delta,\varepsilon}$
 from the previous lemma.

\begin{lemma}\label{lemma.limit.of.convolution}
Let $0< \delta<\varepsilon$, $n\geq2$, and $w_{\delta,\varepsilon}$ as in Lemma \ref{lemma.w.delta.epsilon}. If $f\in L_\textrm{loc}^1(\mathbb{R}^n)$, then, for every $x$ a Lebesgue point of $f$, we have
\begin{equation}\label{formula.remainder}
\lim_{\delta\to0}\, (f\ast w_{\delta,\varepsilon})(x)
=
\frac{1}{n}
\int_{0}^{\varepsilon}
\dashint_{B_{r}(x)}
f(y)\,dy
\,
\left(r-\frac{r^{n+1}}{\varepsilon^n}\right)\,dr
=
\int_0^\eps A_r[f](x)\, K_{\varepsilon}(r)\,dr.
\end{equation}
Moreover, if $f\in L^{\infty}_{\textrm{loc}}(\mathbb{R}^n)$, then \eqref{formula.remainder} holds pointwise for all $x$.
\end{lemma}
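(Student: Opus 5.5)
Since $w_{\delta,\varepsilon}$ in Lemma \ref{lemma.w.delta.epsilon} is radial, I will write $w_{\delta,\varepsilon}(y)=W_{\delta}(|y|)$ and compute the convolution in polar coordinates. Because $w_{\delta,\varepsilon}$ is even, we have $(f\ast w_{\delta,\varepsilon})(x)=\int_{B_\varepsilon(0)} f(x+y)\,W_\delta(|y|)\,dy$. Introduce the ball-integral function $F(r)=\int_{B_r(x)}f(y)\,dy=|B_1(0)|\,r^n A_r[f](x)$. It is absolutely continuous in $r$, with $F'(r)=\int_{\partial B_r(x)}f\,d\mathcal{H}^{n-1}$ for a.e.\ $r$. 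Then
\[
(f\ast w_{\delta,\varepsilon})(x)=\int_0^\varepsilon W_\delta(r)\,F'(r)\,dr .
\]
On $[\delta,\varepsilon]$ the profile $W_\delta=\psi$ is independent of $\delta$. On $[0,\delta)$ it equals $\phi$, which is continuous across $r=\delta$, and $W_\delta(\varepsilon)=0$. Integrating by parts therefore gives
\[
(f\ast w_{\delta,\varepsilon})(x)=-\int_0^\varepsilon W_\delta'(r)\,F(r)\,dr ,
\]
with no boundary terms, since $F(0)=0$ and $W_\delta(\varepsilon)=0$.

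From the ODE in the proof of Lemma \ref{lemma.w.delta.epsilon}, I compute $W'_\delta$ explicitly:
\[
\psi'(r)=\frac{1}{n\,\varepsilon^n|B_1(0)|}\left(r-\varepsilon^n r^{1-n}\right)\quad\text{on }(\delta,\varepsilon),
\qquad
\phi'(r)=\frac{1}{n\,\varepsilon^n|B_1(0)|}\left(1-\frac{\varepsilon^n}{\delta^n}\right)r\quad\text{on }(0,\delta).
\]
This holds in both cases $n\ge3$ and $n=2$. Substituting $F(r)=|B_1(0)|\,r^nA_r[f](x)$, the outer piece becomes
\[
-\int_\delta^\varepsilon \psi' F\,dr=\frac1n\int_\delta^\varepsilon A_r[f](x)\left(r-\frac{r^{n+1}}{\varepsilon^n}\right)dr,
\]
which is exactly the target integrand $K_\varepsilon(r)$ restricted to $[\delta,\varepsilon]$. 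The inner piece is
\[
-\int_0^\delta\phi' F\,dr=\frac1n\left(\frac{1}{\delta^n}-\frac{1}{\varepsilon^n}\right)\int_0^\delta A_r[f](x)\,r^{n+1}\,dr .
\]

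Now let $\delta\to0$. At a Lebesgue point, $A_r[f](x)\to f(x)$, so $A_r[f](x)$ is bounded for small $r$. The inner piece is then bounded by $C\delta^{-n}\delta^{n+2}=C\delta^2\to0$. For the outer piece, $A_r[f](x)$ is bounded on $(0,\varepsilon]$: it is continuous in $r>0$ and bounded near $0$. Since $K_\varepsilon$ is bounded, dominated convergence, or simply the integrability of $A_r[f](x)K_\varepsilon(r)$ on $(0,\varepsilon)$, gives convergence to $\int_0^\varepsilon A_r[f](x)K_\varepsilon(r)\,dr$.

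If $f\in L^\infty_{loc}$, then $|A_r[f](x)|\le\|f\|_{L^\infty(B_\varepsilon(x))}$ for every $x$ and every $r$. The same two estimates therefore hold without the Lebesgue point assumption, which yields the pointwise statement for all $x$.

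I expect the main delicate step to be the integration by parts with a merely $L^1_{loc}$ function $f$. To handle it, I will either rely on the absolute continuity of $F$ together with the Lipschitz continuity of $W_\delta$ (from $C^{1,1}$), or avoid the parts integration entirely. In the latter case I write $W_\delta(|y|)=-\int_{|y|}^\varepsilon W'_\delta(r)\,dr$ and apply Fubini to $\int f(x+y)\int_{|y|}^{\varepsilon}(-W'_\delta(r))\,dr\,dy$. This is justified because $|W'_\delta|$ is bounded for fixed $\delta$, and it directly yields $-\int_0^\varepsilon W'_\delta(r)F(r)\,dr$.
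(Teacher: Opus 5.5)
Your proof is correct and follows essentially the same route as the paper. Both arguments use polar coordinates, integrate by parts against the ball integral $\int_{B_r(x)}f$, use the explicit derivatives $\phi'$ and $\psi'$, bound the inner piece by $O(\delta^2)$ at a Lebesgue point, and pass to the limit in the outer piece to obtain $\int_0^\varepsilon A_r[f](x)K_\varepsilon(r)\,dr$. The only difference is that you integrate by parts once over all of $[0,\varepsilon]$, using the continuity of the profile at $r=\delta$. The paper instead integrates by parts separately on $[0,\delta]$ and $[\delta,\varepsilon]$ and shows that each resulting boundary term $\pm\phi(\delta)\int_{B_\delta(x)}f$ vanishes as $\delta\to0$; these terms in fact cancel exactly, since $\phi(\delta)=\psi(\delta)$.
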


\begin{proof}
Let us write \eqref{problem.w.sol.n} and \eqref{problem.w.sol.n=2} as 
\[
w_{\delta,\varepsilon}(x)
=
\left\{
\begin{aligned}
 &\phi(|x|) & &\textrm{if}\ 0\leq|x|<\delta\\
 &\psi(|x|) & &\textrm{if}\ \delta\leq|x|\leq\varepsilon\\
 &0 & &\textrm{otherwise.}\\
\end{aligned}
\right.
\]
Then, a change to polar coordinates gives
 \begin{equation}\label{convolution.polar.two.integrals}
 \begin{split}
(f\ast w_{\delta,\varepsilon})(x)
&=
\int_{B_\delta(0)}
f(x-y)\, \phi(|y|)\,dy
+
\int_{B_\varepsilon(0)\setminus B_\delta(0)}
f(x-y)\, \psi(|y|)\,dy
\\
&=
\int_{0}^{\delta}
F(r) \phi(r)r^{n-1}\,dr
+
\int_{\delta}^{\varepsilon}
F(r) \psi(r)r^{n-1}\,dr,
\end{split}
\end{equation}
 where we denote
 \[
 F(r)=\int_{\partial B_1(0)}
f(x-ry)\,d\mathcal{H}^{n-1}(y).
 \]
 Let us study both integrals on the right-hand side of \eqref{convolution.polar.two.integrals}. An integration by parts yields
\begin{equation}\label{estima.first part}
\begin{split}
\int_{0}^{\delta}
F(r) \phi(r)r^{n-1}\,dr
&=
\phi(\delta)
\int_{0}^{\delta}
F(s) s^{n-1}\,ds
-
\int_{0}^{\delta}
\int_{0}^{r}
F(s) s^{n-1}\,ds\;
\phi'(r)\,dr
\\
&=
|B_1(0)|\left(
\phi(\delta)\,\delta^n
\dashint_{ B_\delta(x)}
f(y)\,dy
-
\int_{0}^{\delta}
\dashint_{ B_r(x)}
f(y)\,dy
\;
\phi'(r)\,r^{n}\,dr
\right).
\end{split}
\end{equation}
We have that
\[
\phi(r)
=
\left\{
\begin{aligned}
&\frac{1}{2\varepsilon^n|B_1(0)|}\left[\left(1-\frac{\varepsilon^n}{\delta^n}\right)\frac{r^2}{n}-\frac{\varepsilon^{2}}{n-2}\left(1-\frac{\varepsilon^{n-2}}{\delta^{n-2}}\right)\right] 
 & &\textrm{if}\ n\geq3\\
&\frac{1}{2\varepsilon^2\pi}\left[\left(1-\frac{\varepsilon^2}{\delta^2}\right)
\frac{r^2}{2}+\varepsilon^2(\log{\varepsilon}-\log{\delta}) \right] 
 & &\textrm{if}\ n=2
\end{aligned}
\right.
\]
and therefore
\begin{equation}\label{estima1}
\phi(\delta)\,\delta^n\dashint_{ B_\delta(x)}
f(y)\,dy
=o(\delta)\qquad
\textrm{as}\ \delta\to0,
\end{equation} for every Lebesgue point $x$ of $f$ 
regardless of the dimension. Note also that it holds for all $x$ when $f$ is locally bounded. \par
On the other hand, for any dimension $n\geq2$ we have that
\[
\phi'(r)=\frac{1}{\varepsilon^n|B_1(0)|}\left(1-\frac{\varepsilon^n}{\delta^n}\right)\frac{r}{n}, 
\]
 and we can estimate
\begin{equation}\label{estima2}
\begin{split}
&\left|
\int_{0}^{\delta}
\dashint_{ B_r(x)}
f(y)\,dy
\;
\phi'(r)\,r^{n}\,dr
\right|
\\
&\hspace{52pt}\leq
\int_{0}^{\delta}
\dashint_{ B_r(x)}
|f(y)|\,dy
\;|\phi'(r)|\,r^{n}\,dr
\leq
\frac{C}{\delta^n}\int_{0}^{\delta}
r^{n+1}\,dr
=
C
\delta^{2}
\end{split}
\end{equation}
as $\delta\to0$. We conclude from \eqref{estima.first part}, \eqref{estima1}, and \eqref{estima2} that
\begin{equation}\label{first.integral.goes.to.zero}
\int_{0}^{\delta}
F(r) \phi(r)r^{n-1}\,dr\to0\qquad\textrm{as}\ \delta\to0.
\end{equation}

Similarly, an integration by parts (recall that $\psi(\varepsilon)=0$) yields
\begin{equation}\label{estima.second.part}
\begin{split}
\int_{\delta}^{\varepsilon}
F(r) \psi(r)r^{n-1}\,dr
&=
-
\int_{\delta}^{\varepsilon}
\int_{\delta}^{r}
F(s) s^{n-1}\,ds\,
\psi'(r)\,dr
\\
&=
-
\int_{\delta}^{\varepsilon}
\int_{B_{r}(x)\setminus B_\delta(x)}
f(y)\,dy
\,
\psi'(r)\,dr
\\
&=
-
\psi(\delta)
\int_{B_{\delta}(x)}
f(y)\,dy
-
\int_{\delta}^{\varepsilon}
\int_{B_{r}(x)}
f(y)\,dy
\,
\psi'(r)\,dr.
\end{split}
\end{equation} 
Regardless of the dimension $n$, we have that
\begin{equation}\label{estima2.1}
\psi(\delta)
\int_{B_{\delta}(x)}
f(y)\,dy
=
\psi(\delta)\delta^n|B_1(0)|
\dashint_{B_{\delta}(x)}
f(y)\,dy
=o(\delta)\qquad
\textrm{as}\ \delta\to0.
\end{equation}
On the other hand, for any dimension $n\geq2$ we have that
\begin{equation}\label{formula.psi'}
\psi'(r)r^n=\frac{r^{n+1}-r\varepsilon^n}{n\varepsilon^n|B_1(0)|},
\end{equation}
which is integrable near 0, and we get
\begin{equation}\label{estima2.2}
\int_{\delta}^{\varepsilon}
\dashint_{ B_r(x)}
f(y)\,dy
\;
\psi'(r)\,r^{n}\,dr
\to
\int_{0}^{\varepsilon}
\dashint_{ B_r(x)}
f(y)\,dy
\;
\psi'(r)\,r^{n}\,dr
\end{equation}
as $\delta\to0$. We conclude from \eqref{estima.second.part}, \eqref{estima2.1}, and \eqref{estima2.2} that
\begin{equation}\label{second.integral.goes.to.stuff}
\int_{\delta}^{\varepsilon}
F(r) \psi(r)r^{n-1}\,dr
\to
-
\int_{0}^{\varepsilon}
\int_{B_{r}(x)}
f(y)\,dy
\,
\psi'(r)\,dr
\end{equation}
as $\delta\to0.$

Finally, \eqref{formula.remainder} follows from \eqref{convolution.polar.two.integrals}, \eqref{first.integral.goes.to.zero}, and \eqref{second.integral.goes.to.stuff}.
\end{proof}

We are ready for the proof of Theorem \ref{MVP.pointwise.Laplacian}.
\begin{proof}[Proof of Theorem \ref{MVP.pointwise.Laplacian}]
Let $0< \delta<\varepsilon$ and $w_{\delta,\varepsilon}$ as in Lemma \ref{lemma.w.delta.epsilon}. Then,
 \[
 f\ast w_{\delta,\varepsilon}
=\Delta v \ast w_{\delta,\varepsilon}
=v \ast \Delta w_{\delta,\varepsilon}
=v \ast (\chi_\varepsilon - \chi_\delta),
\]
where $\chi_r(x)=|B_r(0)|^{-1}\cdot\chi_{B_r(0)}(x)$. Hence, by Lemma \ref{lemma.limit.of.convolution} and the continuity of $v$, we have
 \[
 \frac{1}{n}
\int_{0}^{\varepsilon}
\dashint_{B_{r}(x)}
f(y)\,dy
\,
\left(r-\frac{r^{n+1}}{\varepsilon^n}\right)\,dr
=
 \lim_{\delta\to0}\, (f\ast w_{\delta,\varepsilon})(x)
=
\dashint_{B_\varepsilon(x)} v(y)\,dy-v(x),
\]
which can be rewritten as \eqref{with_A}. 
 Then, noting the value of the integral of $K_{\varepsilon}(r)$, we can rewrite
\[
\begin{split}
 \frac{1}{n}
 &
\int_{0}^{\varepsilon}
\dashint_{B_{r}(x)}
f(y)\,dy
\,
\left(r-\frac{r^{n+1}}{\varepsilon^n}\right)\,dr
\\
&=
 \frac{1}{n}
\int_{0}^{\varepsilon}
\dashint_{B_{r}(x)}
\big(f(y)-f(x)\big)\,dy
\,
\left(r-\frac{r^{n+1}}{\varepsilon^n}\right)\,dr
+\frac{f(x)}{2(n+2)}\varepsilon^2
\end{split}
\]
for almost every $x\in\Omega$
and \eqref{MVP_lap} follows.

Assume now that $f$ is continuous at $x$. Then, for every $\eta>0$, there exists a $\tau>0$ such that $|x-y|<\tau$ yields 
$|f(x)-f(y)|<\eta$. In particular, for $\varepsilon<\tau$, we have that
\[
|R_\varepsilon(x)|\leq
 \frac{
\eta}{n}
\int_{0}^{\varepsilon}
\left(r-\frac{r^{n+1}}{\varepsilon^n}\right)\,dr= \frac{
\eta}{2(n+2)}\varepsilon^2.
\]
Since $\eta$ is arbitrary, we have that $R_\varepsilon(x)=o(\varepsilon^2)$.
If $f \in C(\Omega)$, then $f$
 is uniformly continuous on compact subsets and $R_\varepsilon(x)=o(\varepsilon^2)$ locally uniformly.
 
In the case that $f$ has a modulus of continuity $\omega$, we have
$$
\Big|A_r \big[f(x)-f(\cdot)\big](x)\Big|
\leq
\dashint_{B_{r}(x)}\omega\big(|x-y|\big)\,dy
\leq \omega(r)
$$
and, therefore,
\[
\begin{split}
|R_\varepsilon(x)|
&\leq
\int_{0}^{\varepsilon}
\omega(r)
K_{\varepsilon}(r)\,dr\leq
 \frac{\omega(\varepsilon)}{n}
\int_{0}^{\varepsilon}
\left(r-\frac{r^{n+1}}{\varepsilon^n}\right)\,dr=
\frac{1}{2(n+2)}\,\omega(\varepsilon)\,\varepsilon^2,
\end{split}
\]
as desired.
\end{proof}

Let us finish this section with the spherical-average counterpart of Theorem \ref{MVP.pointwise.Laplacian}.
\begin{proof}[Proof of Corollary \ref{cor.spherical.m=1}]
We define the differential operator $L_r=\frac{r}{n}\frac{\partial}{\partial r}+1$. 
For a continuous $u$, and \mbox{$0<r<\textnormal{dist}(x,\partial\Omega)$,} it is proved in  \cite{Lysik.2011} that
\[
L_r\Big(A_r[u](x)\Big)=\mathcal{A}_r[u](x),
\]
which can be obtained differentiating in $r$ the polar coordinate decomposition
\begin{equation}\label{coarea.identity}
A_r[u](x)=\frac{n}{r^n}\int_0^r t^{\,n-1}\mathcal{A}_t[u](x)\,dt.
\end{equation}
Thus, applying the operator $L_\eps$ to the left-hand side of \eqref{with_A}, we obtain the left-hand side of \eqref{MVP.spherical}.
For the right-hand side of \eqref{with_A}, we use  Leibniz's rule and the fact that $\frac{\partial}{\partial\eps}K_\eps(r)
=\frac{r^{n+1}}{\eps^{n+1}}$ and $K_\eps(\eps)=0$, which gives
\[
\frac{\partial}{\partial\eps}\int_0^\eps A_r[f](x)\,K_\eps(r)\,dr
=\int_0^\eps A_r[f](x)\,\frac{r^{n+1}}{\eps^{n+1}}\,dr,
\]
and therefore
\[
L_\eps\int_0^\eps A_r[f](x)\,K_\eps(r)\,dr
=\int_0^\eps A_r[f](x)
\left(\frac{r^{n+1}}{n\,\eps^{n}}
+\frac{r}{n}-\frac{r^{n+1}}{n\,\eps^{n}}\right)dr
=\int_0^\eps A_r[f](x)\,\frac{r}{n}\,dr,
\]
which is the first form of the right-hand side in \eqref{MVP.spherical}. Using \eqref{coarea.identity}, and exchanging the order of integration,
 we get the second form of the right-hand side in \eqref{MVP.spherical} in terms of the kernel \eqref{ K.spherical.def}.
The rest of the statements follow as in the proof of Theorem \ref{MVP.pointwise.Laplacian}.
\end{proof}

\section{Proof of Theorem \ref{general.main} and Corollary \ref{cor.spherical.higher}}\label{section.m.greater.1}

Before proceeding with the proof of Theorem \ref{general.main},
let us first derive some useful lemmas. We start by showing that the averaging operators $A_{r}$ commute with each other, and with integrals in the parameter $r$.

\begin{lemma}\label{commute}
Let $f\in L^1_{loc}(\Omega)$, $\rho,\tau,\eps>0$, and $w\in L^\infty(0,\eps)$.
\begin{enumerate}
\item For every $x\in\Omega$ with 
$\textnormal{dist}(x,\partial\Omega)>\rho+\tau$,
\begin{equation*}
A_\rho\big[A_\tau f\big](x)
=
A_\tau\big[A_\rho f\big](x),
\end{equation*}
all quantities being finite.
\item For every $x\in\Omega$ with 
$\textnormal{dist}(x,\partial\Omega)>\rho+\eps$, the function
$y\mapsto\int_0^\eps A_s[f](y)\,w(s)\,ds$ is defined for a.e.\ 
$y\in B_\rho(x)$, belongs to $L^1(B_\rho(x))$, and satisfies
\begin{equation*}
A_\rho\Big[\int_0^\eps A_s[f](\cdot)\,w(s)\,ds\Big](x)
=\int_0^\eps A_\rho\big[A_s f\big](x)\,w(s)\,ds.
\end{equation*}

\end{enumerate}
\end{lemma}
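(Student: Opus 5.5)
The plan is to write both statements as integrals of $f$ against nonnegative bounded kernels. Fubini--Tonelli then does all the work, with absolute integrability coming from $f\in L^1_{loc}$ on a compact ball.

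For part (1), set $d=\textnormal{dist}(x,\partial\Omega)>\rho+\tau$. For every $y\in \overline{B_\rho(x)}$ we have $\overline{B_\tau(y)}\subset B_{\rho+\tau}(x)\subset\subset\Omega$, so $A_\tau f(y)$ is defined and finite for every such $y$. We write
\[
A_\rho[A_\tau f](x)=\frac{1}{|B_\rho||B_\tau|}\int_{\R^n}\int_{\R^n}\chi_{B_\rho(x)}(y)\,\chi_{B_\tau(y)}(z)\,f(z)\,dz\,dy .
\]
The integrand is supported in $B_\rho(x)\times B_{\rho+\tau}(x)$. Its absolute value has double integral at most $|B_\rho|\,\|f\|_{L^1(B_{\rho+\tau}(x))}<\infty$, so Tonelli and Fubini apply. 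Integrating first in $y$ gives
\[
A_\rho[A_\tau f](x)=\frac{1}{|B_\rho||B_\tau|}\int f(z)\,|B_\rho(x)\cap B_\tau(z)|\,dz,
\]
and this expression is symmetric in $(\rho,\tau)$, because $|B_\rho(x)\cap B_\tau(z)|=|B_\tau(x)\cap B_\rho(z)|$. Both sets are intersections of two balls whose centers are at distance $|x-z|$, with radii $\{\rho,\tau\}$; swapping which center gets which radius is a rigid motion (reflection through the midpoint of $x$ and $z$). Running the same computation for $A_\tau[A_\rho f](x)$ therefore gives the identical integral, and all quantities are finite.

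For part (2), let $\textnormal{dist}(x,\partial\Omega)>\rho+\eps$ and $M=\|w\|_{L^\infty}$. Consider
\[
G(y,s,z)=\chi_{B_\rho(x)}(y)\,\frac{\chi_{B_s(y)}(z)}{|B_s|}\,f(z)\,w(s)\quad\text{on } B_\rho(x)\times(0,\eps)\times\R^n .
\]
By Tonelli,
\[
\int\!\!\int\!\!\int|G|\le M\,\eps\,|B_\rho|\sup_{y,s}A_s|f|(y).
\]
That supremum can blow up as $s\to0$, so instead we bound $\int_0^\eps A_s|f|(y)\,ds$ after integrating in $y$. By the computation in part (1),
\[
\int_{B_\rho(x)}A_s|f|(y)\,dy\le \|f\|_{L^1(B_{\rho+\eps}(x))}
\]
uniformly in $s$. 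Hence the triple integral is at most $M\eps\|f\|_{L^1(B_{\rho+\eps}(x))}<\infty$.

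Fubini then gives three conclusions. First, for a.e.\ $y\in B_\rho(x)$ the function $s\mapsto A_s[f](y)w(s)$ is integrable, so $g(y)=\int_0^\eps A_s[f](y)w(s)\,ds$ is defined a.e. Second, $g\in L^1(B_\rho(x))$. Third, the order of integration between $y$ and $s$ may be exchanged, which is exactly the claimed identity after dividing by $|B_\rho|$. Measurability of $(y,s)\mapsto A_s[f](y)$ is needed for this; it follows because it is given by a jointly measurable integral of $f$ (continuity in $(y,s)$ for $s>0$, by dominated convergence).

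The only delicate step is the one just handled. Since $A_s|f|(y)$ need not be bounded as $s\to0$ at non-Lebesgue points, the integrability must come from integrating in $y$ first, not from pointwise bounds. Everything else is routine Fubini.
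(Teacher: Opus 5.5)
Your proof is correct and is essentially the paper's Fubini--Tonelli argument. Part (2) coincides with the paper's proof, including the key uniform bound $\int_{B_\rho(x)}A_s|f|(y)\,dy\le\|f\|_{L^1(B_{\rho+\eps}(x))}$; that bound comes from $|B_\rho(x)\cap B_s(z)|\le|B_s|$, not from the $|B_\rho|$ bound you used in part (1), and it is what gives integrability near $s=0$. The only variation is in part (1). The paper rescales to unit balls, writing $A_\tau[A_\rho f](x)=|B_1(0)|^{-2}\int_{|v|\le1}\int_{|u|\le1}f(x+\rho u+\tau v)\,du\,dv$ and swapping the $u,v$ integrals, whereas you integrate out the intermediate variable and use the reflection symmetry $|B_\rho(x)\cap B_\tau(z)|=|B_\tau(x)\cap B_\rho(z)|$. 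Both work; just note that the sampled points lie in $\overline{B_{\rho+\tau}(x)}\subset\Omega$, not necessarily in the open ball $B_{\rho+\tau}(x)$ as you wrote.
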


\begin{proof}
\noindent{}1.\quad{}Writing out the iterated average 
\[
A_\tau [A_\rho f](x) =\dashint_{B_\tau(x)}\dashint_{B_\rho (y)} f(z)\,dzdy,
\]
and applying Tonelli's theorem 
to $|f|$ (the integrand is supported in 
$\overline{B_{\rho+\tau}(x)}\subset\Omega$, where $f$ is integrable), 
Fubini's theorem applies. Using change of variables and exchanging the order of integration when necessary, we get 
\[
\begin{split}
A_\tau [A_\rho f]&(x) = \frac{1}{|B_1(0)|^2 \tau^n\rho^n}\int_{|y-x|\le \tau}\int_{|z-y|\le\rho} f(z)\,dzdy\\
&
=\frac{1}{|B_1(0)|^2 }\int_{|v|\le 1}\int_{|u|\le 1} f(\rho u + \tau v + x)\,dudv
=\frac{1}{|B_1(0)|^2 }\int_{|u|\le 1}\int_{|v|\le 1} f(\rho u + \tau v + x)\,dvdu\\
&
=\frac{1}{|B_1(0)|^2 \tau^n\rho^n}\int_{|z-x|\le\rho}\int_{|y-z|\le \tau} f(y)\,dydz
= A_\rho [A_\tau f](x).
\end{split}
\]

\noindent{}2.\quad{}The function $(y,s)\mapsto A_s[f](y)\,w(s)$ is jointly measurable 
on $B_\rho(x)\times(0,\eps)$. 
By~Tonelli theorem,
\[
\begin{split}
\dashint_{B_\rho(x)}A_s\big[|f|\big](y)\,dy
&=\frac{1}{|B_1(0)|^2\rho^n s^n}
\int_{B_\rho(x)}\int_{\R^n}\chi_{\{|z-y|\leq s\}}\,|f(z)|\,dzdy\\
&=\frac{1}{|B_1(0)|^2\rho^n s^n}
\int_{\R^n}|f(z)|\int_{B_\rho(x)}\chi_{\{|y-z|\leq s\}}\,dydz\\
&=\frac{1}{|B_1(0)|^2\rho^n s^n}
\int_{B_{\rho+s}(x)}|f(z)|\cdot\big|B_\rho(x)\cap B_s(z)\big|\,dz\\
&\leq\frac{1}{|B_1(0)|\rho^n}\,
\int_{B_{\rho+\eps}(x)}|f(z)|\,dz
=\frac{\|f\|_{L^1(B_{\rho+\eps}(x))}}{|B_1(0)|\,\rho^n},
\end{split}
\]
where in the third line we have used that
$\big|B_\rho(x)\cap B_s(z)\big|=0$ when 
$z\notin B_{\rho+s}(x)$. Therefore,
\[
\int_0^\eps \dashint_{B_\rho(x)} A_s\big[|f|\big](y)\,dy\;|w(s)|\,ds
\leq \frac{\eps}{|B_1(0)|\,\rho^n}\,
\|w\|_{L^\infty(0,\eps)}\,\|f\|_{L^1(B_{\rho+\eps}(x))}<\infty.
\]
Fubini's theorem now yields all the assertions.
\end{proof}

We will also need the following
elementary estimates for iterated averages.

\begin{lemma}\label{lemma.osc}
Let $f\in L^\infty_{loc}(\Omega)$ and $x\in\Omega$.
\begin{enumerate}\itemsep3pt
\item If $r_1,\dots,r_j>0$ with
$r_1+\cdots+r_j<\textnormal{dist}(x,\partial\Omega)$, then
\[
\big|\big(A_{r_1}\!\cdots A_{r_j}\big)[f](x)-f(x)\big|
\leq
\sup\Big\{|f(z)-f(x)|:\ z\in B_{r_1+\cdots+r_j}(x)\Big\}.
\]
\item If $l\in\mathbb{N}$ and
$l\eps<\textnormal{dist}(x,\partial\Omega)$, then
\[
\big|(A_\eps-I)^l[f](x)\big|
\leq
\big(2^{l}-1\big)\,
\sup\Big\{|f(z)-f(x)|:\ z\in B_{l\eps}(x)\Big\}.
\]
\end{enumerate}
In particular, if $f$ has modulus of continuity $\omega$ in $\Omega$, the two
right-hand sides are bounded by $\omega(r_1+\cdots+r_j)$ and
$(2^l-1)\,\omega(l\eps)$, respectively.
\end{lemma}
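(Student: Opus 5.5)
For part (1), I would write each iterated average as an average of $f$ against a probability measure, and then compare directly with $f(x)$. Unfolding the definition, as in the proof of Lemma~\ref{commute}(1), gives
\[
\big(A_{r_1}\cdots A_{r_j}\big)[f](x)=\frac{1}{|B_1(0)|^j}\int_{B_1(0)}\cdots\int_{B_1(0)} f\big(x+r_1u_1+\cdots+r_ju_j\big)\,du_1\cdots du_j .
\]
All arguments $x+\sum r_iu_i$ lie in $B_{r_1+\cdots+r_j}(x)$, which sits inside $\Omega$ by hypothesis. Because the measure has total mass one, I can subtract $f(x)$ inside the integral and take absolute values. This bounds the difference by the sup of $|f(z)-f(x)|$ over that ball.

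Two measure-theoretic points need care. First, $f$ is only defined a.e., so the sup should be read as an essential sup over $z$; the pushforward of the product measure under $(u_i)\mapsto x+\sum r_iu_i$ is absolutely continuous for $j\ge1$, so null sets are not seen. Second, at $z=x$ the value $f(x)$ is simply whatever fixed value the representative takes there, and the bound holds for that value. Fubini's theorem is justified by local boundedness, exactly as in Lemma~\ref{commute}.

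For part (2), I would expand binomially:
\[
(A_\eps-I)^l=\sum_{k=0}^{l}\binom{l}{k}(-1)^{l-k}A_\eps^k .
\]
Since $\sum_k\binom{l}{k}(-1)^{l-k}=0$ for $l\ge1$, I can write
\[
(A_\eps-I)^l[f](x)=\sum_{k=1}^{l}\binom{l}{k}(-1)^{l-k}\big(A_\eps^k[f](x)-f(x)\big).
\]
Part (1), applied with $j=k$ and all radii equal to $\eps$ (admissible since $k\eps\le l\eps$), bounds each difference by the sup over $B_{k\eps}(x)\subset B_{l\eps}(x)$. The triangle inequality then gives the factor $\sum_{k=1}^l\binom{l}{k}=2^l-1$. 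For $l=0$ the statement would read $|f(x)|\le0$, which is false, so I would note that $l\ge1$ is intended, consistent with the factor $2^l-1$.

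The modulus-of-continuity consequences are immediate, since $|f(z)-f(x)|\le\omega(|z-x|)\le\omega(r_1+\cdots+r_j)$ and $\omega$ is non-decreasing. The only real obstacle is the a.e.\ bookkeeping in part (1); the rest is elementary.
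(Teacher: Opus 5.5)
Your proposal is correct and follows essentially the paper's argument. For part (1), the paper applies the one-step bound $|A_s[g](x)|\le\sup_{B_s(x)}|g|$ iteratively to $g=f-f(x)$, whereas you unfold the iterated average into a single probability integral over $B_1(0)^j$; this is only a cosmetic difference. Part (2) is the same binomial-expansion argument with $\sum_{k=1}^{l}\binom{l}{k}=2^l-1$, and your remarks that the supremum should be read as an essential supremum and that $l\ge1$ is intended are accurate refinements the paper leaves implicit.
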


\begin{proof}
\noindent{}1.\quad{}For any $g\in L^\infty_{loc}$, and any 
$B_s(x)\subset\Omega$, we have $|A_s[g](x)|\le\sup_{B_s(x)}|g|$.
The claim follows applying this estimate iteratively, and using that every value of $g(y)=f(y)-f(x)$
sampled by $A_{r_1}\!\cdots A_{r_j}$ is taken at points in $B_{r_1+\cdots+r_j}(x)$.

\noindent{}2.\quad{}Expanding the binomial and using that its coefficients sum to zero, we get
\[
(A_\eps-I)^l
=\sum_{j=0}^{l}\binom{l}{j}(-1)^{l-j}A_\eps^{j}
=\sum_{j=1}^{l}\binom{l}{j}(-1)^{l-j}\big(A_\eps^{j}-I\big).
\]
By part 1 with $r_1=\cdots=r_j=\eps$, each term satisfies
\[
\big|\big(A_\eps^{j}-I\big)[f](x)\big|\le
\sup_{y\in B_{j\eps}(x)}|f(y)-f(x)|\le\sup_{y\in B_{l\eps}(x)}|f(y)-f(x)|
\]
 for $j\le l$. Using that 
$\sum_{j=1}^{l}\binom{l}{j}=2^l-1$, we get the result.
\end{proof}

In the next lemma we record a useful relation between the Pizzetti coefficients in \eqref{pizz_c} and certain iterated weighted integrals of the kernel $K_\varepsilon$.

\begin{lemma}\label{Kint}
Let $K_\eps$ be as in \eqref{K_def}. Then,
for every integer $j\geq1$, we have
\begin{equation}\label{integral.K_eps}
\int_0^\eps 
\left(\frac{r}{\varepsilon}\right)^{2j-2}K_{\varepsilon}(r)\,dr =
\frac{\eps^2}{(2j)(n+2j)}.
\end{equation}
In particular, the Pizzetti coefficients $c_\alpha$ in \eqref{pizz_c} can be expressed as follows:
\begin{equation}\label{pizzetti.integral.form}
c_\alpha= 
\prod_{j=1}^\alpha\frac{1}{(2j)(n+2j)}
=\frac{1}{\eps^{2\alpha}}
\int_0^\eps\!\cdots \int_0^\eps\prod_{j=1}^\alpha
\left(\frac{r_j}{\varepsilon}\right)^{2j-2}K_{\varepsilon}(r_j)\,dr_1\ldots dr_\alpha.
\end{equation}
\end{lemma}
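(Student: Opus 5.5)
The plan is a direct computation of the one-dimensional integral in \eqref{integral.K_eps}, after which \eqref{pizzetti.integral.form} follows from Fubini's theorem, since the integrand factorizes. Substituting the definition \eqref{K_def}, we have
\[
\int_0^\eps \left(\frac{r}{\eps}\right)^{2j-2}K_\eps(r)\,dr
=\frac{1}{n\,\eps^{2j-2}}\int_0^\eps\Big(r^{2j-1}-\frac{r^{2j-1+n}}{\eps^n}\Big)\,dr
=\frac{1}{n\,\eps^{2j-2}}\Big(\frac{\eps^{2j}}{2j}-\frac{\eps^{2j}}{2j+n}\Big).
\]
Combining the fractions gives $\frac{1}{2j}-\frac{1}{n+2j}=\frac{n}{(2j)(n+2j)}$. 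The factor $n$ cancels, and we are left with $\eps^2/\big((2j)(n+2j)\big)$. This is exactly \eqref{integral.K_eps}. As a sanity check, the case $j=1$ recovers the value $\eps^2 c_1=\eps^2/(2(n+2))$ already noted after \eqref{K_def}.

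For \eqref{pizzetti.integral.form}, the integrand over $[0,\eps]^\alpha$ is a product of nonnegative functions, each depending on a single variable $r_j$. By Tonelli/Fubini, the $\alpha$-fold integral therefore equals the product over $j=1,\dots,\alpha$ of the one-dimensional integrals computed above. That product is $\prod_{j=1}^\alpha \eps^2/\big((2j)(n+2j)\big)=\eps^{2\alpha}\prod_{j=1}^\alpha\frac{1}{(2j)(n+2j)}$. Dividing by $\eps^{2\alpha}$ yields $c_\alpha$ as defined in \eqref{pizz_c}.

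It remains only to confirm that this product agrees with the Pizzetti constant $c_\alpha=1/\big(2^\alpha\alpha!\prod_{j=1}^\alpha(n+2j)\big)$ from \eqref{Pizzetti.formula.volume}. This holds because $\prod_{j=1}^\alpha 2j=2^\alpha\alpha!$. There is no real obstacle here. The only step requiring care is the algebraic simplification in the one-dimensional integral, namely that the factor $n$ from the kernel cancels exactly.
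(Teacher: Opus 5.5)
Your proposal is correct and follows the paper's proof: compute the one-dimensional integral directly, using the cancellation $\frac{1}{2j}-\frac{1}{n+2j}=\frac{n}{(2j)(n+2j)}$, then factor the $\alpha$-fold integral by Fubini/Tonelli. Your extra check that $\prod_{j=1}^\alpha 2j=2^\alpha\alpha!$, which reconciles \eqref{pizz_c} with the constants in \eqref{Pizzetti.formula.volume}, is a correct addition that the paper leaves implicit.
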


\begin{proof}
Identity \eqref{integral.K_eps} follows from direct integration; \eqref{pizzetti.integral.form} then follows from Fubini's theorem.
\end{proof}

\begin{remark}\label{remark.n.minus.2}
In line with Remark \ref{remark.K.mathcal.K.dimension.intro}, replacing $n$ by $n-2$ in
Lemma \ref{Kint} gives the moments of $\mathcal{K}_\eps$
when $n\geq3$, namely
\[
\int_0^\eps\Big(\frac{r}{\eps}\Big)^{2j-2}\mathcal{K}_\eps(r)\,dr
=\frac{\eps^2}{(2j)(n+2j-2)},
\vspace{+3pt}
\]
and the formula remains valid for $n=2$ by monotone convergence,
since the integrands increase to their limit as $n\to2^+$.
\end{remark}

Finally, let us prove some estimates and integrals involving the coefficients $b_{\eps}^{m,k}(r)$.

\begin{lemma}\label{lemma.B.properties}
Let $1\le k\le m$, $r=(r_1,\ldots, r_m)$, and $b_{\eps}^{m,k}(r)$ defined by \eqref{B_def}.
The following hold.
\begin{enumerate}\itemsep3pt
\item\emph{Kernel factorization:}
\begin{equation}\label{B_kernels}
\sum_{\alpha\in\mathcal{C}(m,k)}\;
\prod_{i=1}^{k}\prod_{j=1}^{\alpha_i}\left(
\left(\frac{r_{\sigma_{i-1}+j}}{\eps}\right)^{2j-2}K_\eps(r_{\sigma_{i-1}+j})
\right)
=
b_{\eps}^{m,k}(r)\cdot\prod_{q=1}^{m}K_\eps(r_q).
\end{equation}

\item \emph{Bounds:}
For every
$r\in[0,\eps]^m$, we have the bounds
\begin{equation}\label{item.bound}
 0\le b^{m,k}_\eps(r)\le\binom{m-1}{k-1}.
 \end{equation}
 
\item\emph{Integrals:}
\begin{equation}\label{B_integral}
\int_{[0,\eps]^m}b_{\eps}^{m,k}(r)\,\prod_{q=1}^{m}K_\eps(r_q)\,dr
=\eps^{2m}\sum_{\alpha\in\mathcal{C}(m,k)}\;\prod_{i=1}^{k}c_{\alpha_i}.
\end{equation}
In particular, for $0\le l\le m-1$, we have
\begin{equation}\label{B_integral.tilde.c's}
\int_{[0,\eps]^m}b_{\eps}^{m,m-l}(r)\,\prod_{q=1}^{m}K_\eps(r_q)\,dr
=
\eps^{2m}\,\tilde c_l
\end{equation}
 for $\tilde c_l$ as in \eqref{pizz_c}.
\end{enumerate}
\end{lemma}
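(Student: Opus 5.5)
The three items are essentially bookkeeping, so I will prove them in order, each one resting on the definition \eqref{B_def} and on Lemma \ref{Kint}.

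\textbf{Kernel factorization.} For item (1), fix a composition $\alpha\in\mathcal{C}(m,k)$. Then split the product on the left-hand side of \eqref{B_kernels} into two pieces: the weights $(r_{\sigma_{i-1}+j}/\eps)^{2j-2}$ and the kernels $K_\eps(r_{\sigma_{i-1}+j})$. By the block bijection \eqref{block.bijection}, the map $(i,j)\mapsto\sigma_{i-1}+j$ runs through $\{1,\dots,m\}$ exactly once. Hence the kernel factors reassemble into $\prod_{q=1}^m K_\eps(r_q)$, which does not depend on $\alpha$. Pulling this common factor out of the sum over $\alpha$ leaves exactly $b^{m,k}_\eps(r)$.

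\textbf{Bounds.} For item (2), nonnegativity is immediate, since every factor is a nonnegative power of $r_q/\eps\in[0,1]$. For the same reason each product is at most $1$. Therefore $b^{m,k}_\eps(r)\le|\mathcal{C}(m,k)|$. The standard stars-and-bars count (choosing $k-1$ cut points among the $m-1$ gaps) gives $|\mathcal{C}(m,k)|=\binom{m-1}{k-1}$.

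\textbf{Integrals.} For item (3), I will integrate \eqref{B_kernels} over $[0,\eps]^m$ and use linearity to exchange the finite sum over $\alpha$ with the integral. For a fixed $\alpha$ the integrand is a product of one-variable functions, each in a distinct variable, again by \eqref{block.bijection}. Fubini/Tonelli therefore applies, since all factors are nonnegative and bounded on $[0,\eps]$, and the integral factors as
\[
\prod_{i=1}^k\prod_{j=1}^{\alpha_i}\int_0^\eps\Big(\frac{s}{\eps}\Big)^{2j-2}K_\eps(s)\,ds.
\]
By \eqref{integral.K_eps}, each inner integral equals $\eps^2/((2j)(n+2j))$. Hence block $i$ contributes $\eps^{2\alpha_i}c_{\alpha_i}$, and the total power of $\eps$ is $\eps^{2\sum_i\alpha_i}=\eps^{2m}$. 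This proves \eqref{B_integral}. Setting $k=m-l$ and comparing with \eqref{pizz_c} gives \eqref{B_integral.tilde.c's}.

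There is no real obstacle here. The only point needing care is the index bookkeeping: each variable $r_q$ must appear exactly once per composition, which is what lets both the kernel product factor out and Fubini split the integral. Both facts follow from \eqref{block.bijection}.
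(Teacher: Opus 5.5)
Your proposal is correct and follows the paper's proof essentially step for step. You factor out $\prod_{q=1}^m K_\eps(r_q)$ via the block bijection \eqref{block.bijection}, and you bound each term by $1$ and count $|\mathcal{C}(m,k)|=\binom{m-1}{k-1}$; your cut-point count is the paper's bijection with $(k-1)$-subsets of $\{2,\dots,m\}$. Finally, you split the integral by Fubini/Tonelli and evaluate each factor with Lemma \ref{Kint}, exactly as the paper does.
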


\begin{proof}
\noindent{}1.\quad{}For each fixed term $\alpha$ in the composition,
we can use \eqref{block.bijection} to
re-index the product of kernels. Namely, as $(i,j)$ ranges over the index set, the global
index $\sigma_{i-1}+j$ ranges over $\{1,\dots,m\}$, each value attained
exactly once. We obtain
\[
\prod_{i=1}^{k}\prod_{j=1}^{\alpha_i}K_\eps\big(r_{\sigma_{i-1}+j}\big)
=
\prod_{q=1}^{m}K_\eps(r_q),
\]
 which factors out of the sum over $\mathcal{C}(m,k)$, and gives
\eqref{B_kernels}.

\noindent{}2.\quad{}Non-negativity is clear from the definition \eqref{B_def}. To get the upper bound, 
it suffices to count the number of terms in the sum,
since every factor in \eqref{B_def} is at most $1$. But then, the map
$\alpha\mapsto\{\sigma_1+1,\dots,\sigma_{k-1}+1\}$, which records the starting
positions of the blocks $I_2,\dots,I_k$, is a bijection from
$\mathcal{C}(m,k)$ onto the $(k-1)$-element subsets of $\{2,\dots,m\}$. Hence
$|\mathcal{C}(m,k)|=\binom{m-1}{k-1}$.

\noindent{}3.\quad{}Using the kernel factorization \eqref{B_kernels}, we can write the integral in terms of the variables $\{r_{\sigma_{i-1}+j}\}_{i,j}$. Since,
by the 
bijection \eqref{block.bijection}, 
each of the variables
$r_1,\dots,r_m$ appears as $r_{\sigma_{i-1}+j}$
for exactly one pair $(i,j)$, we can use Fubini's theorem to split the $m$-dimensional integral over $[0,\eps]^m$ into the product of $m$
one-dimensional integrals
\[
\begin{split}
\int_{[0,\eps]^m}b_{\eps}^{m,k}(r)\,\prod_{q=1}^{m}K_\eps(r_q)\,dr
&=
\sum_{\alpha\in\mathcal{C}(m,k)}
\int_{[0,\eps]^m}
\prod_{i=1}^{k}\prod_{j=1}^{\alpha_i}\left(
\left(\frac{r_{\sigma_{i-1}+j}}{\eps}\right)^{2j-2}K_\eps(r_{\sigma_{i-1}+j})
\right)
dr\\
&=
\sum_{\alpha\in\mathcal{C}(m,k)}
\prod_{i=1}^{k}
\;
\prod_{j=1}^{\alpha_i}
\;
\int_0^\eps
\left(\frac{r_{\sigma_{i-1}+j}}{\eps}\right)^{2j-2}K_\eps(r_{\sigma_{i-1}+j})
\;
dr_{\sigma_{i-1}+j}
\\
&=
\sum_{\alpha\in\mathcal{C}(m,k)}
\;
\prod_{i=1}^{k}
c_{\alpha_i}\,\eps^{2\alpha_i}
=
\eps^{2m}\,
\sum_{\alpha\in\mathcal{C}(m,k)}
\;
\prod_{i=1}^{k}
c_{\alpha_i},
\end{split}
\]
where we have used \eqref{integral.K_eps} in the third step. The proof of \eqref{B_integral} is complete, and
 taking
$k=m-l$, and comparing with \eqref{pizz_c} yields~\eqref{B_integral.tilde.c's}.
\end{proof}

For illustration purposes, let us present first the proof of Theorem \ref{general.main} in the biharmonic case.

\begin{proof}[Proof of Theorem \ref{general.main} in the biharmonic case]
If $m=2$, we can rewrite $\lap^2 u = f$ in the form 
 \[
 \lap u = v\qquad\textrm{with}\qquad
 \lap v = f.
\]
Then, since $u$ satisfies Poisson's equation with source term $v$, we can apply \eqref{with_A} to give
\begin{equation}\label{foo}
(A_\eps-I)[u](x)=
\int_{0}^{\varepsilon}
A_{r_1} [v](x)\cdot K_{\varepsilon}({r_1})\,dr_1.
\end{equation}
Next, we apply the operator $(A_\eps - I)$ to both sides of \eqref{foo}; recalling Lemma \ref{commute}, we obtain
\begin{align*}
(A_\eps-I)^2[u](x)&=
(A_\eps - I)\int_{0}^{\varepsilon}
A_{r_1} [v](x)\cdot K_{\varepsilon}({r_1})\,dr_1\nonumber\\
&=\int_0^\eps A_{r_1}(A_\eps - I)[v](x)\cdot K_{\varepsilon}({r_1})\,dr_1.
\end{align*}
 Since $v$ also satisfies Poisson's equation, but with source term $f$, we can apply again \eqref{with_A} and Lemma \ref{commute} to obtain
\begin{align*}
(A_\eps-I)^2[u](x) &= \int_0^\eps A_{r_1} \left( \int_0^\eps A_{r_2} [f](x)\cdot K_{\varepsilon}({r_2})\,dr_2\right) \cdot K_{\varepsilon}({r_1})\,dr_1\\
&= \int_0^\eps\int_0^\eps \big(A_{r_1}A_{r_2}\big)[f](x)\cdot K_{\varepsilon}({r_1}) K_{\varepsilon}({r_2})\,dr_1 dr_2,
\end{align*}
which gives \eqref{MVP_1} when $m=2$.

From Lemma \ref{Kint}, we get
\[
\begin{split}
\int_0^\eps \int_0^\eps &\big(A_{r_1}A_{r_2}\big)[f](x)\cdot K_{\varepsilon}({r_1}) K_{\varepsilon}(r_2)\,dr_1 dr_2\\
&=\int_0^\eps \int_0^\eps \left(A_{r_1}A_{r_2}-I- \frac{{r_2}^2}{\eps^2}(A_\eps -I)\right)[f](x)\cdot K_{\varepsilon}({r_1}) K_{\varepsilon}(r_2)\,dr_1 dr_2\\
&
\quad+f(x)\int_0^\eps \int_0^\eps K_{\varepsilon}({r_1}) K_{\varepsilon}(r_2)\,dr_1 dr_2
+\frac{(A_\eps -I)[f](x)}{\eps^2}\int_0^\eps \int_0^\eps {r_2}^2 K_{\varepsilon}({r_1}) K_{\varepsilon}(r_2)\,dr_1 dr_2 \\
&= R_{\eps}(x) + \eps^4\left(\frac{1}{4(n+2)^2}f(x)+ \frac{1}{8(n+4)(n+2)}(A_\eps -I)[f](x)\right).
\end{split}
\]
Note that the two constants in the last line are precisely $\tilde{c}_0=c_1^2=\frac{1}{4(n+2)^2}$ 
and $\tilde{c}_1=c_2=\frac{1}{8(n+2)(n+4)}$, corresponding to $\alpha_1=\alpha_2=1$ and $\alpha_1=2$ in \eqref{pizz_c},
in agreement with \eqref{MVP_2}.

Now, we can estimate $R_{\eps}(x)$ as
\begin{equation}\label{R_est1}
|R_{\eps}(x)|\le\int_0^\eps \int_0^\eps \left|\big(A_{r_1}A_{r_2}\big)[f](x)-f(x)- \frac{{r_2}^2}{\eps^2}(A_\eps -I)[f](x)\right|\cdot K_{\varepsilon}({r_1}) K_{\varepsilon}(r_2)\,dr_1 dr_2.
\end{equation}
If we assume that $f$ is continuous at $x$, then, for every $\eta>0$ there exists a $\delta>0$ such that $|f(z)-f(x)| < \eta$ whenever $|z-x|<\delta$. 
Let $\eps < \delta/2$. Then, for every $r_1, r_2 \in (0,\eps),$ every $y \in B_{r_1}(x)$
 and every $z \in B_{r_2}(y)$ it holds that $|z - x| \le r_1 + r_2 < 2\eps < \delta$
 and we have
\begin{equation}\label{m=2.modulus.est.2}
\begin{split}
&\left|\big(A_{r_1}A_{r_2}\big)[f](x)-f(x)- \frac{{r_2}^2}{\eps^2}(A_\eps -I)[f](x)\right|
\\
&\qquad\leq
\dashint_{B_{r_1}(x)} \!\dashint_{B_{r_2}(y)} \! \left|f(z)-f(x)- \frac{{r_2}^2}{\eps^2}(A_\eps -I)[f](x)\right|\,dzdy\\
&\qquad\leq
\dashint_{B_{r_1}(x)} \!\dashint_{B_{r_2}(y)} \! \left(|f(z)-f(x)| + \frac{{r_2}^2}{\eps^2} \dashint_{B_\eps(x)}|f(t)- f(x)|\,dt\right)\,dzdy
\leq\eta + \frac{{r_2}^2}{\eps^2}\eta
\leq2\eta.
\end{split}
\end{equation}
Combining \eqref{R_est1} and \eqref{m=2.modulus.est.2}, we obtain
 \[
 |R_{\eps}(x)|
 \le
 2\eta
 \int_0^\eps \int_0^\eps 
 K_{\varepsilon}({r_1}) K_{\varepsilon}(r_2)\,dr_1 dr_2
 = \frac{\eta}{2(n+2)^2}\,\eps^4,
\]
as desired. If $f \in C(\Omega)$, then $f$
 is uniformly continuous on compact subsets and $R_\varepsilon(x)=o(\varepsilon^4)$ locally uniformly.
 
In the case that $f$ has a modulus of continuity $\omega$, we can estimate as in \eqref{R_est1}--\eqref{m=2.modulus.est.2} to get
\[
\begin{split}
&\left|\big(A_{r_1}A_{r_2}\big)[f](x)-f(x)- \frac{{r_2}^2}{\eps^2}(A_\eps -I)[f](x)\right|
\\
&\qquad\qquad\leq
\dashint_{B_{r_1}(x)} \!\dashint_{B_{r_2}(y)} \!
\omega(|z-x|)\,dzdy+\frac{{r_2}^2}{\eps^2}\omega(\eps)
\leq
\omega(r_1+r_2)+\frac{{r_2}^2}{\eps^2}\omega(\eps)
\leq 2\,\omega(2\eps),
\end{split}
\]
and, therefore,
\[
|R_\varepsilon(x)|
\le
2\,\omega(2\eps)\int_0^\eps \int_0^\eps 
K_{\varepsilon}({r_1}) K_{\varepsilon}(r_2)\,dr_1 dr_2\leq
\frac{1}{2(n+2)^2}\,\omega(2\eps)\,\eps^4,
\]
which completes the proof in the biharmonic case.
\end{proof}

We are ready for the proof of Theorem \ref{general.main}.

\begin{proof}[Proof of Theorem \ref{general.main}]
We proceed in three steps: first we establish the mean-value property
\eqref{MVP_1} with remainder in integral form, then we derive \eqref{MVP_2} with the remainder $R_\eps$, and finally we prove the
estimates (1)--(3).

\noindent1.\quad{}Let us prove \eqref{MVP_1} by induction. The proof for the base case $m=1$ is given in Theorem \ref{MVP.pointwise.Laplacian}. In particular, equations \eqref{with_A}, \eqref{MVP_lap}, and \eqref{R_MVP_lap} correspond to \eqref{MVP_1}, \eqref{MVP_2}, and \eqref{R_MVP} when $m=1$.
For the inductive step, assume that the claim is true for $m=k-1$, i.e., assume that
whenever $\lap^{k-1} w = f,$ then
\begin{equation}\label{inductive}
(A_\eps-I)^{k-1}[w](x) 
= \int_{[0,\eps]^{k-1}} \big(A_{r_1}\!\cdots A_{r_{k-1}}\big)[f](x)\cdot\prod_{i=1}^{k-1}K_\eps(r_i)\,dr_1\ldots dr_{k-1}.
\end{equation}
Also, we have $\lap^k u = f$, which we can rewrite as
 \[
 \lap u = v\qquad\textrm{with}\qquad
 \lap^{k-1}v = f.
 \]
Then, since $u$ satisfies Poisson's equation with source term $v$, we can apply \eqref{with_A} to give
\begin{equation}\label{step1}
(A_\eps-I)[u](x)=
\int_{0}^{\varepsilon}
A_{r_k} [v](x)\cdot K_{\varepsilon}({r_k})\,dr_k,
\end{equation}
where we are using $r_k$ as the variable of integration for convenience.
Next, we apply the operator $(A_\eps - I)^{k-1}$ to both sides of \eqref{step1}. Applying Lemma \ref{commute} iteratively, we obtain
\begin{align*}
(A_\eps-I)^k[u](x)&=
(A_\eps - I)^{k-1}\int_{0}^{\varepsilon}
A_{r_k} [v](x)\cdot K_{\varepsilon}({r_k})\,dr_k\nonumber\\
&=\int_0^\eps A_{r_k}(A_\eps - I)^{k-1}[v](x)\cdot K_{\varepsilon}({r_k})\,dr_k.
\end{align*}
 Since $v$ satisfies $\lap^{k-1} v = f$, we use our inductive assumption \eqref{inductive}~to~get
\[
\begin{split}
(A_\eps-I)^k[u](x) &= \int_0^\eps A_{r_k} \left( \int_{[0,\eps]^{k-1}} \big(A_{r_1}\!\cdots A_{r_{k-1}}\big)[f](x)\cdot\prod_{i=1}^{k-1}K_\eps(r_i)\,dr_1\ldots dr_{k-1}
\right) \cdot K_{\varepsilon}({r_k})\,dr_k\\
&= \int_{[0,\eps]^{k}} \big(A_{r_1}\!\cdots A_{r_{k}}\big)[f](x)\cdot\prod_{i=1}^{k}K_\eps(r_i)\,dr_1\ldots dr_{k},
\end{split}
\]
which gives \eqref{MVP_1} when $m=k$, and, by induction, for all $m\in\mathbb{N}$.

\noindent2.\quad{}Let us now prove \eqref{MVP_2} with the remainder $R_\eps$ given by \eqref{R_MVP}, which amounts to showing that the difference
\begin{equation}\label{main.theorem.step.2.eq1}
(A_\eps-I)^m[u](x)-\eps^{2m}\sum_{l=0}^{m-1}\tilde c_l\,(A_\eps-I)^l[f](x),
\end{equation}
admits the integral representation \eqref{R_MVP}.
This is easily seen once we observe that the factors $(A_\eps-I)^l[f](x)$ do not depend on
$r$, which, by \eqref{B_integral.tilde.c's}, implies
\begin{equation}\label{main.theorem.step.2.eq2}
\eps^{2m}\,\tilde c_l\,(A_\eps-I)^l[f](x)
=
\int_{[0,\eps]^m}b_{\eps}^{m,m-l}(r)\,(A_\eps-I)^l[f](x)\cdot\prod_{q=1}^{m}K_\eps(r_q)\,dr.
\end{equation}
Then, \eqref{main.theorem.step.2.eq1}, \eqref{main.theorem.step.2.eq2}, and 
 \eqref{MVP_1} give \eqref{R_MVP}.

\noindent3.\quad{}We will now prove the estimates (1)--(3).
 The remainder representation \eqref{R_MVP} places the pointwise value $f(x)$ and the
correction terms under the same weighted integral as $A_r[f](x)$.
Since $b^{m,m}_\eps=1$, the integrand of
\eqref{R_MVP} can be split as
\[
\Big(\big(A_{r_1}\!\cdots A_{r_m}\big)[f](x)-f(x)\Big)
-\sum_{l=1}^{m-1}b^{m,m-l}_\eps(r)\,(A_\eps-I)^l[f](x).
\]
Therefore, using $K_\eps\geq0$, the bounds \eqref{item.bound}, and
the integral \eqref{integral.K_eps} applied with $j=1$ in each variable~$r_i$, we obtain
\begin{equation}\label{R.split}
\begin{split}
|R_\eps(x)|
\le\frac{\eps^{2m}}{\big(2(n+2)\big)^m}
\Bigg(
\sup_{r\in[0,\eps]^m}
\Big|\big(A_{r_1}\!\cdots A_{r_m}\big)[f](x)&-f(x)\Big|
\\
+&\sum_{l=1}^{m-1}\binom{m-1}{l}\,\big|(A_\eps-I)^l[f](x)\big|
\Bigg).
\end{split}
\end{equation}

Assume that $f$ is continuous at $x$, and let $\eta>0$. Choose
$\delta>0$ such that $|f(z)-f(x)|<\eta$ for all $z\in B_\delta(x)\cap\Omega$,
and let $\eps<\min\{\delta,\textnormal{dist}(x,\partial\Omega)\}/m$, so that every
value of $f$ sampled in \eqref{R.split} is taken at points of
$B_{m\eps}(x)\subset B_\delta(x)$. By Lemma \ref{lemma.osc}, 
\[
\sup_{r\in[0,\eps]^m}
\Big|\big(A_{r_1}\!\cdots A_{r_m}\big)[f](x)-f(x)\Big|\leq\eta
\]
and 
\[
\sum_{l=1}^{m-1}\binom{m-1}{l}\,\big|(A_\eps-I)^l[f](x)\big|
\leq
\sum_{l=1}^{m-1}\binom{m-1}{l}(2^l-1)\,\eta.
\]
Hence, using the binomial theorem to simplify constants, we have
\[
|R_\eps(x)|
\le\frac{\eta\,\eps^{2m}}{\big(2(n+2)\big)^m}
\left(1+\sum_{l=1}^{m-1}\binom{m-1}{l}\big(2^{l}-1\big)\right)
=\frac{3^{m-1}-2^{m-1}+1}{\big(2(n+2)\big)^m}\,\eta\,\eps^{2m},
\]
and, since $\eta>0$ is arbitrary, $R_\eps(x)=o(\eps^{2m})$ as $\eps\to0$. 
If $f \in C(\Omega)$, then $f$
 is uniformly continuous on compact subsets and $R_\varepsilon(x)=o(\varepsilon^{2m})$ locally uniformly.
 
 In the case that $f$ has a modulus of continuity $\omega$, by Lemma
\ref{lemma.osc}, we have
\[
\Big|\big(A_{r_1}\!\cdots A_{r_m}\big)[f](x)-f(x)\Big|\le\omega(r_1+\cdots+r_m)
\le\omega(m\eps)
\]
and
\[
\big|(A_\eps-I)^l[f](x)\big|\le(2^l-1)\,\omega(l\eps)
\le(2^l-1)\,\omega(m\eps),
\]
 so \eqref{R.split} yields
\[
|R_\eps(x)|
\le\frac{3^{m-1}-2^{m-1}+1}{\big(2(n+2)\big)^m}\,\omega(m\eps)\,\eps^{2m},
\]
which completes the proof.
\end{proof}

Let us conclude with the spherical-average counterpart of Theorem \ref{general.main}.

\begin{proof}[Proof of Corollary \ref{cor.spherical.higher}]
Let us start from \eqref{eq.multiradius}.
Since the radii in \eqref{eq.multiradius} are independent, the application of an operator
$L_{\eps_i}=\frac{\eps_i}{n}\frac{\partial}{\partial\eps_i}+1$ to the left-hand side as in the proof of Corollary \ref{cor.spherical.m=1},
replaces the factor $(A_{\eps_i}-I)$ by $(\mathcal{A}_{\eps_i}-I)$.
 For the right-hand
side of \eqref{eq.multiradius}, applying Leibniz's rule one $L_{\eps_i}$ at a time, as in the proof of the case $m=1$ and setting  $\eps_1=\ldots=\eps_m=\eps$ yields \eqref{MVP.spherical.higher}.
The second form of the right-hand side in \eqref{MVP.spherical.higher} follows inserting the polar
coordinate decomposition \eqref{coarea.identity} in each variable
$r_i$ and exchanging the order of integration.

For \eqref{MVP_2.spherical} and the remainder estimates, we follow Steps 2
and 3 of the proof of Theorem \ref{general.main}, with the second form of the
right-hand side of \eqref{MVP.spherical.higher} in place of \eqref{MVP_1}.
Only two ingredients need to be replaced. First, by Remark
\ref{remark.n.minus.2}, the moments of $\mathcal{K}_\eps$ are
$\eps^{2}/\big((2j)(n+2j-2)\big)$ for all $n\geq2$; therefore, since the
coefficients $b^{m,k}_\eps$ in \eqref{B_def} are purely combinatorial and do
not depend on the dimension, Lemma \ref{lemma.B.properties} holds with
$\mathcal{K}_\eps$ in place of $K_\eps$ and $d_\alpha$ in place of
$c_\alpha$; in particular,
\[
\int_{[0,\eps]^m}b^{m,m-l}_\eps(r)\,\prod_{i=1}^m\mathcal{K}_\eps(r_i)\,dr
=\eps^{2m}\,\tilde d_l.
\]
Subtracting $\sum_{l=0}^{m-1}b^{m,m-l}_\eps(r)\,(\mathcal{A}_\eps-I)^l[f](x)$ 
under the integral in \eqref{MVP.spherical.higher} then yields 
\eqref{MVP_2.spherical}, for a.e.\ $x$ and a.e.\ $\eps$ so that all the 
spherical means of $f$ involved are defined. Second, Lemma \ref{lemma.osc} 
holds for iterated spherical means with the same proof. The estimates (1)--(3) then follow as in 
Step 3, with appropriate changes to the constants.
\end{proof}

\end{document}